\numberwithin{equation}{section}
\def\P{{\mathbb P}}
\def\Z{{\mathbb Z}}
\newtheorem{theorem}{Theorem}[section]
\newtheorem{lemma}[theorem]{Lemma}
\newtheorem{proposition}[theorem]{Proposition}
\theoremstyle{definition}
\newtheorem{convention and reminder}[theorem]{Convention and Reminder}
\newtheorem{convention and remark}[theorem]{Convention and Remark}
\newtheorem{definition and remark}[theorem]{Definition and Remark}
\newtheorem{reminders and definition}[theorem]{Reminders and Definition}
\newtheorem{notation and remarks}[theorem]{Notation and Remarks}
\newtheorem{notation and remark}[theorem]{Notation and Remark}
\newtheorem{example}[theorem]{Example}
\newcommand\Ker{\operatorname{\Ker}}
\newcommand\Supp{\operatorname{Supp}}
\def\QR{\mathsf{QR}}
\title[Rank 3 quadratic generators: the characteristic 3 case]{Rank 3 quadratic generators of Veronese embeddings: the characteristic 3 case}
\author{Donghyeop Lee}
\email{donghyeoplee@yonsei.ac.kr}
\address{Donghyeop Lee, Department of Mathematics, Yonsei University, 50 Yonsei-ro, Seodaemun-gu, Seoul 03722, Republic of Korea}
\author{Euisung Park}
\email{euisungpark@korea.ac.kr}
\address{Euisung Park, Department of Mathematics, Korea University, Seoul 136-701, Republic of Korea}
\author{Saerom Sim}
\email{saeromiii@korea.ac.kr}
\address{Saerom Sim, Department of Mathematics, Korea University, Seoul 136-701, Republic of Korea}
\begin{document}

\keywords{Veronese embedding, property $\QR (3)$, rank index, field of characteristic $3$}
\subjclass[2010]{14A25, 14G17, 14N05}

\begin{abstract}
This paper investigates property QR(3) for Veronese embeddings over an algebraically closed field of characteristic $3$. We determine the rank index of $(\P^n , \mathcal{O}_{\P^n} (d))$ for all $n \geq 2$, $d \geq 3$, proving that it equals $3$ in these cases. Our approach adapts the inductive framework of \cite{HLMP}, re-proving key lemmas for characteristic $3$ to establish quadratic generation by rank $3$ forms. We further compute the codimension of the span of rank $3$ quadrics in the space of quadratic equations of the second Veronese embedding, showing it grows as ${n+1 \choose 4}$. This provides a clear explanation of the exceptional behavior exhibited by the second Veronese embedding in characteristic $3$. Additionally, we show that for a general complete intersection of quadrics $X \subset \P^r$ of dimension at least $3$, the rank index of $(X,\mathcal{O}_X (2))$ is $4$, thereby confirming the optimality of our main bound. These results complete the classification of the rank index for Veronese embeddings when ${\rm char}(\mathbb{K}) \ne 2$.
\end{abstract}

\maketitle \setcounter{page}{1}

\section{Introduction}
\noindent This paper continues the investigation initiated in \cite{HLMP} regarding the property $\QR (3)$ of the Veronese embeddings of arbitrary projective varieties.

Let $X$ be a projective variety over an algebraically closed field $\mathbb{K}$ with ${\rm char}(\mathbb{K}) \ne 2$, and let $L$ be a very ample line bundle on $X$ defining a linearly normal embedding
$$X \subset \P^r , \quad r = h^0 (X,L)-1.$$
For the past few decades, considerable attention has been devoted to establishing conditions under which the homogeneous ideal $I(X,L)$ of $X$ in $\P^r$ is generated by quadrics and its first few syzygy modules are generated by linear syzygies (cf. \cite{Gre1}, \cite{Gre2}, \cite{GL}, \cite{EL}, \cite{GP}, \cite{Ina}, \cite{LP}, etc). Another important direction in the study of defining equations of $X$ is to investigate whether $I(X,L)$ can be defined by $2$-minors of one or several linear determinantal presentations (cf. \cite{EKS}, \cite{Pu}, \cite{Ha}, \cite{B}, \cite{SS}, etc). In such a case, $I(X)$ admits a generating set consisting of quadrics of rank $\leq 4$. Along this line, we say that $(X,L)$ satisfies property $\QR (k)$ if the ideal $I(X,L)$ is generated by quadrics of rank at most $k$. When $I(X,L)$ is generated by quadrics, we define the rank index of $(X,L)$, denoted $\mbox{rank-index}(X,L)$, as the minimal integer $k$ such that $(X,L)$ satisfies property $\QR (k)$. As $X$ is irreducible and nondegenerate in $\P^r$, one always has
$$\mbox{rank-index}(X,L) \geq 3.$$
Recent results show that the lowest bound is attained in a wide range of cases; that is, there exist many examples where $\mbox{rank-index}(X,L)$ attains the minimal possible value $3$.

\begin{theorem}[Theorem 1.1 and Theorem 1.3 in \cite{HLMP}, Theorem 1.1 in \cite{P22}]\label{thm:HLMP}
Suppose that ${\rm char}(\mathbb{K}) \neq 2,3$. Then

\renewcommand{\descriptionlabel}[1]%
             {\hspace{\labelsep}\textrm{#1}}
\begin{description}
\setlength{\labelwidth}{13mm}
\setlength{\labelsep}{1.5mm}
\setlength{\itemindent}{0mm}

\item[$(1)$] The rank index of $(\P^n ,\mathcal{O}_{\P^n} (d))$ is $3$ for all $n \geq 1$ and $d \geq 2$.
\item[$(2)$] Let $L$ be a very ample line bundle on a projective variety $X$ defining a linearly normal embedding $X \subset \P^r$. If $m \geq 2$ is an integer such that $X$ in $\P^r$ is $j$-normal for all $j \geq m$ and $I(X,L)$ is generated by forms of degree $\leq m$, then
$$\mbox{rank-index}(X,L^d ) = 3 \quad \mbox{for all} \quad d \geq m.$$
\item[$(3)$] Let $\mathcal{C}$ be a projective integral curve of arithmetic genus $g$, and let $\mathcal{L}$ be a line bundle on $\mathcal{C}$ of degree $d \geq 4g+4$. Then $\mbox{rank-index} (\mathcal{C},\mathcal{L}) = 3$.
\end{description}
\end{theorem}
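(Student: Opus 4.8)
The plan is to convert all three statements into a single vector-space spanning problem. The first point is that, once the ideal $I(X,L)$ is known to be generated by quadrics, property $\QR(3)$ is \emph{equivalent} to the assertion that the degree-two piece $I(X,L)_2$ is spanned over $\mathbb{K}$ by quadrics of rank $\le 3$: if $I(X,L)_2=\mathrm{span}_{\mathbb K}\{q_i\}$ with each $q_i$ of rank $\le 3$, then the ideal generated by the $q_i$ already contains $I(X,L)_2$, hence all of $I(X,L)$. (Quadratic generation itself follows in every case from the normality inputs below.) Since a nondegenerate irreducible variety always forces rank-index $\ge 3$, it suffices to prove the upper bound, i.e. that each relevant $I(\,\cdot\,)_2$ is rank-$3$ spanned.

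Part $(1)$ is the heart of the matter, so I would attack it first. Take $L=\mathcal O_{\P^n}(d)$ with homogeneous coordinates $x_A$ indexed by degree-$d$ monomials $A$ in $z_0,\dots,z_n$, and the evaluation $x_A\mapsto A$ into $\mathrm{Sym}^{2d}$ of the $z$'s; then $I_2$ is spanned by the binomials $x_Ax_B-x_Cx_D$ with $AB=CD$. The rank-$3$ elements available are the pullbacks of factorizations $f^2=gh$ of degree-$d$ forms in the $z_i$: for any two linear forms $p,q$ in the $z_i$ and any $1\le i\le d-1$ one gets
\[
Q_i(p,q)=x_{p^iq^{d-i}}^2-x_{p^{i+1}q^{d-i-1}}\,x_{p^{i-1}q^{d-i+1}}\in I_2,
\]
a quadric of rank $\le 3$. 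These are permuted by $\mathrm{GL}_{n+1}$, so the space they span is a subrepresentation, and the spanning claim becomes the statement that, after decomposing $I_2=\ker(\mathrm{Sym}^2\mathrm{Sym}^dV\to\mathrm{Sym}^{2d}V)$ into Schur modules, the rank-$3$ generators surject onto every summand. Concretely I would realize this by polarizing $Q_i(p,q)$ in $p$ and $q$ (substituting $p\mapsto p+tp'$, $q\mapsto q+tq'$ and reading off coefficients in $t$) to manufacture the general binomial, including the ``odd'' ones for which no coordinate square $x_E^2$ with $E^2=AB$ exists. This is precisely where the characteristic intervenes: the coefficients produced are binomial integers, and the reduction requires inverting $2$ and $3$. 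The failure at $3$ is genuine—already for the second Veronese the rank-$3$ quadrics fall short of spanning $I_2$ by a space of dimension $\binom{n+1}{4}$—which is exactly the phenomenon the rest of the paper must confront.

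Part $(2)$ reduces cleanly to $(1)$ by a multiplication-map diagram. Writing $V=H^0(X,L)$, the hypotheses ($X$ is $j$-normal for $j\ge m$, $d\ge m$) give surjections $\mathrm{Sym}^dV\twoheadrightarrow H^0(L^d)$ and $\mathrm{Sym}^2H^0(L^d)\twoheadrightarrow H^0(L^{2d})$, the latter with kernel $I(X,L^d)_2$. Pulling back along $\mathrm{Sym}^2\mathrm{Sym}^dV\twoheadrightarrow\mathrm{Sym}^2H^0(L^d)$, one sees that $I(X,L^d)_2$ is spanned by the images of two families: first, the Veronese relations $\ker(\mathrm{Sym}^2\mathrm{Sym}^dV\to\mathrm{Sym}^{2d}V)=I(\nu_d(\P^r))_2$; and second, the products $F\cdot g$ with $F\in I(X,L)_d$ and $g\in\mathrm{Sym}^dV$, which land in $I(X,L^d)_2$ because $F$ vanishes on $X$. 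Using that $I(X,L)$ is generated in degrees $\le m\le d$, these two families span the whole kernel. Now the images of the Veronese relations are rank-$3$ spanned by part $(1)$ applied to $\P^r$ (a linear substitution cannot raise rank), while each $F\cdot g$ is literally a product of two linear forms on $\P^N$, hence of rank $\le 2$. Thus $I(X,L^d)_2$ is rank-$3$ spanned, giving $\mbox{rank-index}(X,L^d)=3$.

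Part $(3)$ is then a matter of checking that curves of the stated degree fall under this umbrella. The bound $\deg\mathcal L\ge 4g+4$ comfortably exceeds the classical thresholds of Castelnuovo, Mumford, Fujita and Saint--Donat, so $(\mathcal C,\mathcal L)$ is projectively normal and $I(\mathcal C,\mathcal L)$ is generated by quadrics; one then produces enough rank-$3$ quadrics through $\mathcal C$—for instance the catalecticant/Hankel quadrics attached to the very ample $\mathcal L$—and spans $I(\mathcal C,\mathcal L)_2$ by the same polarization mechanism as in $(1)$ (the extra room in $4g+4$ providing the sections the argument consumes). In all three parts the genuine obstacle is $(1)$: exhibiting a spanning family of rank-$3$ quadrics and controlling the integers that polarization produces. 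It is exactly the non-invertibility of those coefficients modulo $2$ and $3$ that pins Theorem~\ref{thm:HLMP} to $\mathrm{char}(\mathbb K)\ne 2,3$, and the boundary case $\mathrm{char}(\mathbb K)=3$ is what this paper resolves by hand.
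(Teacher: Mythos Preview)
Theorem~\ref{thm:HLMP} is quoted from \cite{HLMP} and \cite{P22}; this paper does not reprove it, but the introduction summarizes the \cite{HLMP} proof of $(1)$, and the proof of Theorem~\ref{thm:main theorem 1}.(2) reuses verbatim the argument for $(2)$. Your treatment of $(2)$ is correct and matches that argument: the embedded $X_d$ is an ideal-theoretic linear section of $V_{r,d}$, so rank-$3$ generation descends. One minor slip: under $\mathrm{Sym}^2\mathrm{Sym}^d V\twoheadrightarrow\mathrm{Sym}^2 H^0(L^d)$ your elements $F\cdot g$ with $F\in I(X,L)_d$ map to zero (since $F$ already dies in $H^0(L^d)$), so the second family is redundant and the Veronese relations alone span $I(X,L^d)_2$---which is precisely the linear-section statement.

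For $(1)$ your route is genuinely different from \cite{HLMP} and, as written, only a programme. Their proof is combinatorial rather than representation-theoretic: after the base cases $n=1$ (rational normal curves) and $d=2$, a double induction on $(n,d)$ reduces each binomial $[I,J]-[K,L]$ to earlier cases via a short list of elementary moves on multi-indices (their Lemmas~4.3--4.7, restated here as Lemma~\ref{lem:summary of lemmas}); no $\mathrm{GL}_{n+1}$-decomposition and no polarization appear. Your polarization strategy is plausible in outline, but you have neither identified the Schur constituents of $\ker(\mathrm{Sym}^2\mathrm{Sym}^d V\to\mathrm{Sym}^{2d}V)$ nor verified that polarizing the $Q_i(p,q)$ reaches each of them; the sentence ``the reduction requires inverting $2$ and $3$'' is the conclusion you want, not an argument for it. Similarly, $(3)$ is the main theorem of \cite{P22} and is not a corollary of $(2)$, since the claim concerns $\mathcal L$ itself rather than a power. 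Your closing remarks point in the right direction---rank-$3$ quadrics coming from decompositions $\mathcal L\cong A^{\otimes 2}\otimes B$ in $\mathrm{Pic}(\mathcal C)$---but ``the same polarization mechanism as in $(1)$'' does not transfer, because there is no ambient $\mathrm{GL}$-action organizing these decompositions on a curve; the actual proof in \cite{P22} is curve-specific and uses the degree bound to control $h^0(A)$ and $h^0(B)$.
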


Theorem \ref{thm:HLMP} provides a strong evidence for Conjecture 6.1 in \cite{HLMP}, which predicts that \\

\begin{enumerate}
\item[$(\star)$] if $X$ is a projective variety, then $\mbox{rank-index}(X,L) = 3$ for all sufficiently ample line bundles $L$ on $X$.  \\
\end{enumerate}

\noindent In contrast, when ${\rm char}(\mathbb{K}) = 3$, the rank index of $(\P^n ,\mathcal{O}_{\P^n} (d))$ is known only in the cases where $n=1$ and $d \geq 2$, or $n \geq 2$ and $d=2$. More precisely, we have
\begin{equation*}
\mbox{rank-index}(\P^n ,\mathcal{O}_{\P^n} (d)) = \begin{cases}
3 & \quad \mbox{if} \quad n=1 \quad \mbox{and} \quad d \geq 2 \quad \mbox{or} \quad n=d=2; \quad \mbox{and} \\
4 & \quad \mbox{if} \quad n \geq 3 \quad \mbox{and} \quad d=2.
\end{cases}
\end{equation*}
For details, see \cite[Theorem 1.2]{HLMP}. Along these lines, the main purpose of this paper is to determine the rank index of $(\P^n ,\mathcal{O}_{\P^n} (d))$ in the remaining cases, namely when ${\rm char}(\mathbb{K}) = 3$, $n \geq 2$ and $d \geq 3$.

Our first main result is the following.

\begin{theorem}\label{thm:main theorem 1}
Suppose that ${\rm char}(\mathbb{K}) = 3$. Then

\renewcommand{\descriptionlabel}[1]%
             {\hspace{\labelsep}\textrm{#1}}
\begin{description}
\setlength{\labelwidth}{13mm}
\setlength{\labelsep}{1.5mm}
\setlength{\itemindent}{0mm}

\item[$(1)$] The rank index of $(\P^n ,\mathcal{O}_{\P^n} (d))$ is $3$ for all $n \geq 2$ and $d \geq 3$.
\item[$(2)$] Let $L$ be a very ample line bundle on a projective variety $X$ defining the linearly normal embedding $X \subset \P^r$. If $m \geq 3$ is an integer such that $X$ is $j$-normal for all $j \geq m$ and $I(X,L)$ is generated by forms of degree $\leq m$, then
$$\mbox{rank-index}(X,L^d ) = 3 \quad \mbox{for all} \quad d \geq m.$$
\end{description}
\end{theorem}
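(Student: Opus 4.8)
The plan is to derive statement $(1)$ from statement $(2)$ and to concentrate the work on $(2)$. Indeed, applying $(2)$ to $(X,L)=(\P^n,\mathcal{O}_{\P^n}(1))$ with $m=3$ is legitimate: the identity embedding $\P^n\subset\P^n$ is linearly normal and $j$-normal for every $j$, while its ideal is zero and hence trivially generated in degrees $\leq 3$. Thus $(2)$ yields $\mbox{rank-index}(\P^n,\mathcal{O}_{\P^n}(d))=3$ for all $d\geq 3$, which is $(1)$ once we recall that the lower bound $\mbox{rank-index}\geq 3$ is automatic for an integral nondegenerate variety. So it suffices to prove $(2)$.

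For $(2)$, write $V=H^0(X,L^d)$ and let $\mu_d\colon \mathrm{Sym}^2 V\to H^0(X,L^{2d})$ be the multiplication map, so that $I(X,L^d)_2=\ker\mu_d$. The hypotheses that $X$ is $j$-normal for $j\geq m$ and that $I(X,L)$ is generated in degrees $\leq m$ guarantee, by the standard Veronese re-embedding argument (this is where the two numerical hypotheses enter), that for $d\geq m$ the ideal $I(X,L^d)$ is generated by its quadratic part $\ker\mu_d$. Consequently, once we know that the rank $\leq 3$ quadrics contained in $\ker\mu_d$ span $\ker\mu_d$ as a $\mathbb{K}$-vector space, they generate the entire ideal, and $(2)$ follows. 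The whole problem is therefore reduced to the spanning statement
$$\ker\mu_d=\mathrm{span}_{\mathbb{K}}\{\,Q\in\ker\mu_d : \rank Q\leq 3\,\}.$$

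To prove this spanning I would run two nested inductions, following and adapting the framework of \cite{HLMP}: an outer induction on $\dim X$ via a general hyperplane section $Y=X\cap H\subset\P^{r-1}$, whose inherited positivity and normality let one transport the statement from $Y$ to $X$ by lifting rank $\leq 3$ quadrics while controlling their rank; and an inner induction on $d$ with base $d=m$, in which the surjectivity $H^0(X,L)\otimes H^0(X,L^{d-1})\to H^0(X,L^d)$ coming from the normality hypotheses expresses elements of $V$ through the lower Veronese and feeds the inductive hypothesis $\mbox{rank-index}(X,L^{d-1})=3$ back into degree $d$. The base of the dimension induction is the case of curves, together with the case $\P^1$ already available in characteristic $3$.

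The main obstacle, and the place where characteristic $3$ is genuinely felt, is the elementary rewriting lemma at the heart of the induction: given $v_1,v_2,v_3,v_4\in V$ with $v_1v_2=v_3v_4$ in $H^0(X,L^{2d})$ --- a rank $\leq 4$ element $v_1v_2-v_3v_4$ of $\ker\mu_d$ --- one must exhibit it as a $\mathbb{K}$-combination of rank $\leq 3$ quadrics of $\ker\mu_d$. When $v_1v_2$ is a perfect square $u^2$ with $u\in V$ this is immediate, since $v_1v_2-v_3v_4=(v_1v_2-u^2)-(v_3v_4-u^2)$ splits into two rank $3$ quadrics; the difficulty is precisely the absence of such an intermediate square, which over a field with ${\rm char}(\mathbb{K})\ne 2,3$ is handled by a three-term averaging identity that degenerates when $3=0$. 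The crux is to supply a substitute valid in characteristic $3$, and here the hypothesis $d\geq 3$ (equivalently $m\geq 3$) is indispensable: it provides the additional degree-$d$ sections needed to construct the missing intermediate terms. This is the structural reason the second Veronese ($d=2$) is exceptional --- there the rank $3$ quadrics fail to span, by the codimension $\binom{n+1}{4}$ computation --- so any successful argument must break exactly at $d=2$, a feature I would use as a consistency check on the key lemma.
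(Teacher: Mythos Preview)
Your reduction of $(1)$ to $(2)$ is formally valid, but you have inverted the natural logical flow, and this is where the real gap lies. In the paper the implication runs the other way: one first establishes $(1)$ for the Veronese varieties $V_{n,d}=\nu_d(\P^n)$ by working directly with the monomial combinatorics of $\P^n$, and then $(2)$ is a short corollary. Indeed, under the hypotheses of $(2)$, the image $\nu_d(X)\subset\P H^0(X,L^d)$ is ideal-theoretically a linear section of $V_{r,d}$ (this is exactly where $j$-normality for $j\geq m$ and generation in degrees $\leq m$ are used, as in the proof of \cite[Theorem~1.3]{HLMP}); since $d\geq m\geq 3$, part $(1)$ gives $\QR(3)$ for $V_{r,d}$, and a linear section inherits $\QR(3)$. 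So once $(1)$ is known, $(2)$ is two lines.

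By contrast, your direct attack on $(2)$ for arbitrary $X$ is not a workable plan as written. The outer induction on $\dim X$ via a general hyperplane section $Y=X\cap H$ presupposes that $Y$ again satisfies the same normality and generation hypotheses and that rank $\leq 3$ quadrics can be lifted from $I(\nu_d(Y))$ to $I(\nu_d(X))$ with controlled rank; neither step is standard, and you give no mechanism for either. Your base case ``curves in characteristic $3$'' is also not available: the curve result you allude to (\cite[Theorem 1.1]{P22}) is stated for ${\rm char}(\mathbb{K})\neq 2,3$. Most importantly, you correctly isolate the crux --- a characteristic-$3$ substitute for the three-term averaging identity --- but do not provide it. The paper does provide it, but only in the concrete setting of $V_{n,d}$: this is the new proof of Lemma~\ref{lem:summary of lemmas}.$(4)$ (the characteristic-$3$ replacement for \cite[Lemma~4.6]{HLMP}), which uses the explicit monomial identity of Lemma~\ref{lem:summary of lemmas}.$(1)$ twice together with the support lemma \ref{lem:summary of lemmas}.$(2)$. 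There is no analogue of this manipulation for sections of an abstract line bundle $L^d$ on an arbitrary $X$, which is why the argument must live on $\P^n$.

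Finally, note that even on $\P^n$ the base case $d=3$ is not handled by the induction machinery at all: Proposition~\ref{boundlemma} verifies $\QR(3)$ for $(\P^n,\mathcal{O}_{\P^n}(3))$ by a finite Macaulay2 computation for $2\leq n\leq 5$ combined with \cite[Proposition~3.3]{HLMP} to propagate to all $n$. Your scheme, which would need $(2)$ at $d=m=3$ as a starting point, has no mechanism to produce this base case.
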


For the proof of Theorem \ref{thm:main theorem 1}.$(1)$, see Theorem \ref{boundlemma}. The proof of Theorem \ref{thm:main theorem 1}.$(2)$ can be found at the end of Section 3.

By Theorem \ref{thm:HLMP}.$(1)$ and Theorem \ref{thm:main theorem 1}.$(1)$, the problem of determining the rank index of $(\P^n,\mathcal{O}_{\P^n} (d))$ is solved for all
$n \geq 1$, $d \geq 2$ and ${\rm char}(\mathbb{K}) \neq 2$. In particular, the only exceptional cases occur when ${\rm char}(\mathbb{K}) = 3$, $n \geq 3$ and $d=2$; that is, for the pair $(\P^n , \mathcal{O}_{\P^n} (2))$.

Let us briefly explain the proof of Theorem \ref{thm:main theorem 1}.$(1)$. We begin by summarizing the structure of the proof of Theorem \ref{thm:HLMP}.$(1)$, which follows a  three-step strategy.\\

\begin{enumerate}
\item[\textbf{Step 1.}] When ${\rm char}(\mathbb{K}) \neq 2$, show that the rank index of $(\P^1 ,\mathcal{O}_{\P^1} (d))$ is $3$ for all $d \geq 2$.
\item[\textbf{Step 2.}] When ${\rm char}(\mathbb{K}) \neq 2, 3$, show that the rank index of $(\P^n ,\mathcal{O}_{\P^n} (2))$ is $3$ for all $n \geq 2$.
\item[\textbf{Step 3.}] When ${\rm char}(\mathbb{K}) \neq 2, 3$, show that the rank index of $(\P^n ,\mathcal{O}_{\P^n} (d))$ is $3$ for all $n \geq 1$ and $ d \geq 2$, using double induction on $n$ and $d$.\\
\end{enumerate}

In Step 2, the case ${\rm char}(\mathbb{K})=3$ is excluded since the rank index of $(\P^n ,\mathcal{O}_{\P^n} (2))=4$ for all $n \geq 3$ when ${\rm char}(\mathbb{K})=3$. Accordingly, the ranges of $n$ and $d$ in Step 2 and 3 must be adjusted to account for the case ${\rm char}(\mathbb{K})=3$. Also, there are some lemmas in \cite{HLMP} that play an important role in Step 3, some of which assume that ${\rm char}(\mathbb{K}) \neq 3$.

The proof of Theorem \ref{thm:main theorem 1}.$(1)$ follows a similar framework, employing double induction on $n \geq 1$ and $d \geq 3$. In our proof of Theorem \ref{thm:main theorem 1}.$(1)$, instead of the above Step 2 and Step 3 above, we show the following two statements.\\

\begin{enumerate}
\item[\textbf{Step 2*.}] When ${\rm char}(\mathbb{K}) = 3$, prove that the rank index of $(\P^n ,\mathcal{O}_{\P^n} (3))$ is $3$ for all $n \geq 1$.
\item[\textbf{Step 3*.}] When ${\rm char}(\mathbb{K}) = 3$, prove that the rank index of $(\P^n ,\mathcal{O}_{\P^n} (d))$ is $3$ for all $n \geq 1$ and $ d \geq 3$, using double induction on $n$ and $d$. \\
\end{enumerate}

Regarding \textbf{Step 3*}, certain lemmas in \cite{HLMP} required the assumption $\mbox{char}(\mathbb{K}) \neq 2,3$. To make the double induction argument work in characteristic $3$, we re-proved these lemmas specifically for $\mbox{char}(\mathbb{K}) = 3$, thereby completing the inductive step and ensuring the validity of the overall strategy.

The proof of Theorem \ref{thm:main theorem 1}.$(2)$ is obtained by combining Theorem \ref{thm:main theorem 1}.$(1)$ with the proof strategy of Theorem \ref{thm:HLMP}.$(2)$. \\

Next, our second main theorem revisits and strengthens \cite[Theorem 1.2]{HLMP}. To state the result precisely, let
$$X := \nu_2 (\P^n ) \subset \P^r$$
be the second Veronese embedding of $\P^n$ for some $n \geq 3$. Let $\Phi_3 (X)$ denote the set of all rank $3$ quadrics in the projective space $\P (I(X)_2 )$. It was shown in \cite{P24} that $\Phi_3 (X)$ is an irreducible projective variety of dimension $2n-2$. Let $\langle \Phi_3 (X) \rangle$ denote the span of $\Phi_3 (X)$ in $\P (I(X)_2 )$. Then the rank index of $(\P^n , \mathcal{O}_{\P^n} (2))$ is $3$ if and only if $\langle \Phi_3 (X) \rangle = \P (I(X)_2 )$. From this point of view, Theorem 1.1 and 1.2 in \cite{HLMP} show that the codimension of $\langle \Phi_3 (X) \rangle$ in $\P (I(X)_2 )$ is positive if and only if ${\rm char}(\mathbb{K}) = 3$ and $n \geq 3$.

Along this line, our second main result is the following.

\begin{theorem}\label{thm:main theorem 2}
Suppose that ${\rm char}(\mathbb{K}) = 3$, and let $X$ and $\langle \Phi_3 (X) \rangle$ be as above. Then
$$\mbox{codim} (  \langle \Phi_3 (X) \rangle ,\P ( I(X)_2 )) = {n+1 \choose 4} .$$
\end{theorem}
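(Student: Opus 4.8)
The plan is to reduce the computation to a single, transparent linear-algebra pairing whose determinant carries the characteristic-$3$ miracle. Throughout write $V=\mathbb{K}^{n+1}$, so that $X=\nu_2(\P(V))$ is the variety of rank-one symmetric matrices in $\P(\operatorname{Sym}^2 V)$, with coordinates $z_{ab}$ $(0\le a\le b\le n)$ assembled into the generic symmetric matrix $T=(z_{ab})$. The space $I(X)_2$ is spanned by the $2\times 2$ minors $M_{I,J}=\det T[I,J]$ of $T$, indexed by pairs of $2$-subsets $I,J\subseteq\{0,\dots,n\}$ (this spanning statement for symmetric determinantal ideals is characteristic-free and I would cite it). The first step is to pin down $\Phi_3(X)$ explicitly: a rank-$3$ quadric in $I(X)_2$ amounts to a nondegenerate ternary form $C$ and three independent quadratic forms $B_1,B_2,B_3\in\operatorname{Sym}^2 V^*$ with $\sum_{p,q}c_{pq}B_pB_q=0$ in $\operatorname{Sym}^4 V^*$; diagonalizing $C$ (legitimate since $\operatorname{char}\mathbb{K}\neq 2$) and a short unique-factorization argument force $B_1,B_2,B_3\in\operatorname{Sym}^2 U$ for a $2$-dimensional $U=\langle\ell,m\rangle\subset V^*$. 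This identifies every rank-$3$ quadric, up to scalar, with $\Delta_U:=\det(B_U^\top T B_U)=z_{\ell\ell}z_{mm}-z_{\ell m}^2$, where $B_U=[\ell\,|\,m]$. Since $\{[\Delta_U]:U\in\mathrm{Gr}(2,V^*)\}$ is the image of the Grassmannian, irreducible of dimension $2n-2$, it must coincide with all of $\Phi_3(X)$ by the result of \cite{P24}.

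Next I would linearize the family $\Delta_U$. By the Cauchy--Binet expansion of $\det(B_U^\top T B_U)$ one gets the identity
\[
\Delta_U=\sum_{I,J}p_I\,p_J\,M_{I,J},
\]
where $p_I$ are the Plücker coordinates of $U$, i.e.\ $\omega_U=\sum_I p_I f_I\in\wedge^2 V^*$ is the decomposable bivector attached to $U$. Introducing the $\mathrm{GL}(V)$-equivariant linear map
\[
\Psi:\operatorname{Sym}^2(\wedge^2 V^*)\longrightarrow I(X)_2,\qquad \Psi(f_If_J)=M_{I,J},
\]
this reads $\Delta_U=\Psi(\omega_U^2)$. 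Consequently $I(X)_2=\Psi\bigl(\operatorname{Sym}^2(\wedge^2 V^*)\bigr)$ (the minors span) and, crucially,
\[
\langle\Phi_3(X)\rangle=\Psi(S),\qquad S:=\operatorname{span}\{\omega_U^2:U\in\mathrm{Gr}(2,V^*)\}.
\]
The span of squares of points on a variety is the annihilator of its quadratic equations, so in the perfect degree-$2$ pairing $\operatorname{Sym}^2(\wedge^2 V^*)\times\operatorname{Sym}^2(\wedge^2 V)\to\mathbb{K}$ (perfect because $\operatorname{char}\mathbb{K}\neq 2$) one has $S=\bigl(I(\mathrm{Gr})_2\bigr)^{\perp}$, where $I(\mathrm{Gr})_2\subset\operatorname{Sym}^2(\wedge^2 V)$ is the space of Plücker relations, of dimension $\binom{n+1}{4}$.

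The heart of the argument is a codimension formula relating these two subspaces. On one side, $\ker\Psi$ contains the Plücker monomials $\rho_{abcd}=f_{ab}f_{cd}-f_{ac}f_{bd}+f_{ad}f_{bc}$ (a direct check gives $M_{ab,cd}-M_{ac,bd}+M_{ad,bc}=0$); since $\dim\operatorname{Sym}^2(\wedge^2 V^*)-\dim I(X)_2=\binom{n+1}{4}$ is a characteristic-free polynomial identity in $n$, the $\binom{n+1}{4}$ independent elements $\rho_{abcd}$ already exhaust $\ker\Psi$. A standard computation with $\dim\Psi(S)=\dim S-\dim(S\cap\ker\Psi)$ and $\dim I(X)_2=\dim\operatorname{Sym}^2(\wedge^2V^*)-\dim\ker\Psi$ then yields
\[
\operatorname{codim}\bigl(\langle\Phi_3(X)\rangle,I(X)_2\bigr)=\binom{n+1}{4}-\rank(\mathcal P),
\]
where $\mathcal P$ is the pairing matrix between $\ker\Psi=\langle\rho_{abcd}\rangle$ and $I(\mathrm{Gr})_2=\langle g_{abcd}\rangle$, with $g_{abcd}=f^*_{ab}f^*_{cd}-f^*_{ac}f^*_{bd}+f^*_{ad}f^*_{bc}$.

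It remains to evaluate $\mathcal P$. Using $\langle f_If_J,f^*_Kf^*_L\rangle=\delta_{IK}\delta_{JL}+\delta_{IL}\delta_{JK}$, the cross terms between distinct $4$-subsets vanish because a pair-partition of $\{a,b,c,d\}$ matches a pair-partition of $\{e,f,g,h\}$ only when the two $4$-subsets agree, while on the diagonal the three surviving monomials each contribute $+1$, giving
\[
\langle\rho_{abcd},g_{efgh}\rangle=3\,\delta_{\{a,b,c,d\},\{e,f,g,h\}}.
\]
Hence $\mathcal P=3\cdot\mathrm{Id}$. In $\operatorname{char}\mathbb{K}=3$ this is the zero matrix, so $\rank(\mathcal P)=0$ and the codimension equals $\binom{n+1}{4}$; the same formula with $\rank(\mathcal P)=\binom{n+1}{4}$ recovers codimension $0$ when $\operatorname{char}\mathbb{K}\neq 3$, consistently with Theorem~\ref{thm:HLMP}. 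I expect the main obstacles to lie not in this final evaluation but in the two supporting steps: making the classification $\Phi_3(X)=\{[\Delta_U]\}$ fully rigorous in characteristic $3$ (controlling the isotropic triples $B_1,B_2,B_3$), and keeping the duality bookkeeping between $\operatorname{Sym}^2(\wedge^2 V^*)$ and $\operatorname{Sym}^2(\wedge^2 V)$ scrupulously correct, since it is precisely the normalization constants in that pairing that produce the decisive factor $3$.
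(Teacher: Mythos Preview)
Your argument is correct and gives a genuinely different, more conceptual proof than the paper's. The paper proceeds by explicit combinatorics: it replaces the generating set $\Gamma_{11}\cup\Gamma_{12}\cup\Gamma_{22}$ of $W(n,2)$ by a modified set $\Gamma'=\Gamma_{11}\cup\Gamma'_{12}\cup\Gamma'_{22}$, proves that $\Gamma'$ is linearly independent by exhibiting distinct leading monomials, and observes (Lemma~4.3) that in characteristic $3$ the elements $H'_{i,j,k,l}=[e_i+e_j,e_k+e_l]+[e_i+e_k,e_j+e_l]+[e_i+e_l,e_j+e_k]$ coincide for the three pair-partitions of $\{i,j,k,l\}$, so that $|\Gamma'_{22}|=\tfrac{1}{3}|\Delta_3|=\binom{n+1}{4}$; the final count $\dim I(X)_2-|\Gamma'|=\binom{n+1}{4}$ is then a direct polynomial identity. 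Your route instead packages the rank-$3$ locus as $\Psi(S)$ with $S=(I(\mathrm{Gr})_2)^\perp$ and reduces the codimension to $\dim(\ker\Psi\cap S)=\binom{n+1}{4}-\operatorname{rank}(\mathcal P)$, where the pairing matrix between the Pl\"ucker relations $\rho_{abcd}\in\ker\Psi$ and the Pl\"ucker relations $g_{abcd}\in I(\mathrm{Gr})_2$ is $3\cdot\mathrm{Id}$. This explains \emph{why} the number $3$ governs the phenomenon---the Pl\"ucker relation is isotropic for its own dual exactly when $3=0$---and recovers the characteristic $\neq 3$ statement for free; it also makes the $\mathrm{GL}(V)$-equivariance transparent. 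The paper's approach, by contrast, is entirely elementary and self-contained, needing no Cauchy--Binet, Grassmannians, or apolarity, and produces an explicit basis of $W(n,2)$ as a by-product. The two potential obstacles you flag are real but minor: the identification $\Phi_3(X)=\{[\Delta_U]\}$ is exactly Notation and Remarks~\ref{nar:producingQofR3}(2) applied to $\P^n$ (locally factorial with $\mathrm{Pic}=\Z\cdot\mathcal{O}(1)$), and the pairing normalization is immaterial since any nondegenerate pairing on $\operatorname{Sym}^2$ in characteristic $\neq 2$ rescales $\mathcal P$ by a unit, preserving the vanishing of its rank in characteristic $3$.
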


The proof of this result is given in the first half of Section 5.

This result clearly illustrates how the pair $(\P^n ,\mathcal{O}_{\P^n} (2))$ becomes increasingly far from satisfying property $\QR (3)$ as $n$ grows.\\

Finally, we consider the case where $(X,L)$ satisfies property $N_1$; that is, the linearly normal embedding of $X$ by $L$ is projectively normal and the ideal $I(X,L)$ is generated by quadratic polynomials. In this setting, one has $\mbox{rank-index}(X,L^d) = 3$ if either
\begin{enumerate}
\item[$(i)$] ${\rm char}(\mathbb{K}) \ne 2,3$ and $d \geq 2$ (cf. Theorem \ref{thm:HLMP}.$(2)$ or \cite[Corollary 5.2]{HLMP})
\end{enumerate}
or else
\begin{enumerate}
\item[$(ii)$] ${\rm char}(\mathbb{K}) = 3$ and $d \geq 3$ (cf. Theorem \ref{thm:main theorem 1}.$(2)$).
\end{enumerate}
But the value of $\mbox{rank-index}(X,L^2)$ remains unknown when ${\rm char}(\mathbb{K}) = 3$. In particular, the rank index of $(\P^n ,\mathcal{O}_{\P^n} (2)) = 4$ for all $n \geq 3$ if ${\rm char}(\mathbb{K}) = 3$, suggesting that, unlike the case ${\rm char}(\mathbb{K}) \ne 2,3$, the rank index of $(X,L^2)$ in ${\rm char}(\mathbb{K}) = 3$ may vary depending on $X$ and $L$. From this perspective, our third main theorem demonstrates that the condition $m \geq 3$ in Theorem \ref{thm:main theorem 1}.$(2)$ is optimal by providing some cases where $(X,L)$ satisfies property $N_1$ and $\mbox{rank-index}(X,L^2) >3$.

\begin{theorem}\label{thm:main theorem 3}
Suppose that ${\rm char}(\mathbb{K}) = 3$. If $X  \subset \P^r$ is a smooth complete intersection of quadrics of dimension $n \geq 3$, then the rank index of $(X,\mathcal{O}_X (2))$ equals $4$.
\end{theorem}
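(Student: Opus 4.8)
The plan is to prove the two bounds $\mbox{rank-index}(X,\mathcal{O}_X (2)) \le 4$ and $\mbox{rank-index}(X,\mathcal{O}_X (2)) > 3$ separately, by realizing the second Veronese re-embedding of $X$ as a linear section of $\nu_2 (\P^r )$. Write $S = \mathbb{K}[x_0 , \dots , x_r ]$, $R = S/I(X)$, and let $Q_1 , \dots , Q_c$ (with $c = r - n$) be the quadrics cutting out $X$. On $\P^N = \P ((S_2 )^* )$ use the Veronese coordinates $z_{ij} = x_i x_j$ ($0 \le i \le j \le r$), and let $\ell_1 , \dots , \ell_c$ be the linear forms on $\P^N$ corresponding to $Q_1 , \dots , Q_c$. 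The embedding of $X$ by $\mathcal{O}_X (2)$ is $\tilde{X} := \nu_2 (X) \subset \Lambda$, where $\Lambda = \P (R_2^* ) = \{ \ell_1 = \cdots = \ell_c = 0\}$; since a point $\nu_2 (x)$ lies on $\{\ell_i = 0\}$ iff $Q_i (x) = 0$, one has $\tilde{X} = \nu_2 (\P^r ) \cap \Lambda$. Because $X$ is a complete intersection, $Q_1 , \dots , Q_c$ form a regular sequence, which restricts to a regular sequence on the coordinate ring $S^{(2)}$ of $\nu_2 (\P^r )$; hence $\tilde{X}$ is an arithmetically Cohen--Macaulay linear section, its ideal is generated by quadrics, and on quadrics $I(\tilde{X})_2 = \rho \bigl( I(\nu_2 (\P^r ))_2 \bigr)$, where $\rho$ denotes restriction of quadratic forms from $\P^N$ to $\Lambda$.

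For the upper bound I would argue directly. By \cite[Theorem 1.2]{HLMP}, $I(\nu_2 (\P^r ))_2$ is spanned by quadrics of rank $\le 4$. Restricting a quadratic form to a linear subspace can only decrease its rank, so $I(\tilde{X})_2 = \rho (I(\nu_2 (\P^r ))_2 )$ is spanned by quadrics of rank $\le 4$ as well. Combined with the quadratic generation of $I(\tilde{X})$ this yields property $\QR (4)$, hence $\mbox{rank-index}(X,\mathcal{O}_X (2)) \le 4$.

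The real content is the strict inequality $\mbox{rank-index} > 3$, i.e.\ that the rank $\le 3$ quadrics do not span $I(\tilde{X})_2$, and here the hypothesis $n \ge 3$ enters decisively. By the Grothendieck--Lefschetz theorem a smooth complete intersection of dimension $\ge 3$ has $\operatorname{Pic}(X) = \Z \cdot \mathcal{O}_X (1)$; since $X$ is smooth and projectively normal, $R$ is a normal domain with trivial class group, hence a UFD, and $R_1 = S_1$. I would use this factoriality to classify $\Phi_3 (\tilde{X})$. A rank $3$ quadric $q \in I(\tilde{X})_2$, viewed as a form on $\Lambda = \P (R_2^* )$, is a smooth conic in three linearly independent forms $\bar{g}_1 , \bar{g}_2 , \bar{g}_3 \in R_2$, and membership in $I(\tilde{X})_2$ means the conic relation holds in $R_4$, which after normalization reads $\bar{g}_1 \bar{g}_2 = \bar{g}_3^2$. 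Factoring in the UFD $R$ (using $R_1 = S_1$), the only solutions with $\bar{g}_1 , \bar{g}_2 , \bar{g}_3$ independent are $\bar{g}_1 = \bar{a}^2$, $\bar{g}_2 = \bar{b}^2$, $\bar{g}_3 = \bar{a}\bar{b}$ for linearly independent $a,b \in S_1$. Thus $q = \rho (z_{aa} z_{bb} - z_{ab}^2 )$ is the restriction of a standard rank $3$ Veronese quadric, and therefore $\langle \Phi_3 (\tilde{X}) \rangle \subseteq \rho \bigl( \langle \Phi_3 (\nu_2 (\P^r )) \rangle \bigr)$.

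It then remains to transfer the characteristic $3$ obstruction through $\rho$. Applying Theorem \ref{thm:main theorem 2} to $\nu_2 (\P^r )$ (note $r = n + c \ge 4$, so $\binom{r+1}{4} > 0$), the space $\langle \Phi_3 (\nu_2 (\P^r )) \rangle$ is a proper subspace of $I(\nu_2 (\P^r ))_2$, detected by a distinguished linear functional $\psi$ that annihilates $\langle \Phi_3 (\nu_2 (\P^r )) \rangle$. I would show that $\psi$ can be chosen so that it descends along $\rho$ --- that is, it vanishes on $\ker (\rho|_{I(\nu_2 (\P^r ))_2 }) = I(\nu_2 (\P^r ))_2 \cap I(\Lambda)_2$ --- while remaining nonzero; the induced functional $\bar{\psi}$ on $I(\tilde{X})_2 = \rho (I(\nu_2 (\P^r ))_2 )$ then vanishes on $\rho (\langle \Phi_3 (\nu_2 (\P^r )) \rangle) \supseteq \langle \Phi_3 (\tilde{X}) \rangle$ but not on all of $I(\tilde{X})_2$, giving $\langle \Phi_3 (\tilde{X}) \rangle \subsetneq I(\tilde{X})_2$ and hence $\mbox{rank-index}(X,\mathcal{O}_X (2)) \ge 4$. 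I expect this last step to be the main obstacle: restriction to $\Lambda$ could a priori absorb the entire $\binom{r+1}{4}$-dimensional obstruction into $I(\Lambda)_2$, so the codimension of Theorem \ref{thm:main theorem 2} does not transfer formally. To control the intersection $I(\nu_2 (\P^r ))_2 \cap I(\Lambda)_2$ I would exploit the smoothness of $X$, which forces the defining quadrics $Q_1 , \dots , Q_c$ (equivalently the linear forms $\ell_i$) to be sufficiently general, together with the naturality of the obstruction in $H^0 (X, \mathcal{O}_X (1)) = S_1$ (of $\wedge^4 S_1$ type), to guarantee that a single functional survives. Verifying the survival of one such functional, rather than computing the full codimension of $\langle \Phi_3 (\tilde{X}) \rangle$, should be the most efficient route to the bound.
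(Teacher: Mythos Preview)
Your overall architecture matches the paper's: realize $\nu_2(X)$ as a linear section of $V_{r,2}$, use $\operatorname{Pic}(X)=\Z\cdot\mathcal{O}_X(1)$ (via Lefschetz) to force every rank $3$ quadric of $\nu_2(X)$ to come from $Q_{\mathcal{O}_X(1),\mathcal{O}_X}$, and then transfer the codimension obstruction of Theorem~\ref{thm:main theorem 2} from $V_{r,2}$ down to $\nu_2(X)$. The upper bound $\le 4$ is handled exactly as in the paper.

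The gap is precisely at the step you flag as the ``main obstacle''. You worry that $\rho$ might annihilate the obstruction, and you propose to rescue one linear functional using smoothness and genericity of the $Q_i$. But this obstacle does not exist: the map
\[
g:=\rho\big|_{I(V_{r,2})_2}\colon I(V_{r,2})_2 \longrightarrow I(\nu_2(X)/\Lambda)_2
\]
is an \emph{isomorphism}, so the full codimension $\binom{r+1}{4}$ survives intact, and no functional-chasing is needed. You already have every ingredient for this in your first paragraph. If $Q\in I(V_{r,2})_2\cap(\ell_1,\dots,\ell_c)$, write $Q=\sum_i \ell_i m_i$; since $\ell_1,\dots,\ell_c$ is a regular sequence on the Cohen--Macaulay ring $S^{(2)}$ and $Q$ vanishes there, the relation $\sum_i \ell_i \bar m_i=0$ in $S^{(2)}$ is a Koszul syzygy, so $\bar m_i=\sum_j a_{ij}\ell_j$ with $a_{ij}=-a_{ji}\in\mathbb{K}$. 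As $I(V_{r,2})_1=0$ this lifts to $m_i=\sum_j a_{ij}\ell_j$ in $T_1$, whence $Q=\sum_{i,j}a_{ij}\ell_i\ell_j=0$. (Equivalently, a Hilbert-function count along the regular sequence gives $\dim I(V_{r,2})_2=\dim I(\nu_2(X)/\Lambda)_2$, and you already noted surjectivity.) With $g$ injective, $W(r,2)\subsetneq I(V_{r,2})_2$ immediately yields $\rho(W(r,2))\subsetneq I(\nu_2(X))_2$, and your inclusion $\langle\Phi_3(\nu_2(X))\rangle\subseteq\rho(W(r,2))$ finishes the lower bound.

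In particular, smoothness of $X$ is used only for the Picard group (and the local factoriality underlying the classification of rank $3$ quadrics), not for any genericity of the defining quadrics $Q_i$; your proposed workaround is unnecessary and, as stated, incomplete.
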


The proof of this result is given at the end of Section 5.

The condition $n \geq 3$ in Theorem \ref{thm:main theorem 3} ensures that the Picard group of $X$ is generated by $\mathcal{O}_X (1)$ (cf. \cite[XII, Corollary 3.7]{Gro}). In the proof of Theorem \ref{thm:main theorem 3}, one sees that this property of ${\rm Pic}(X)$ implies that $(X,\mathcal{O}_X (2))$ does not satisfy property $\QR(3)$. On the other hand, for $n=1$ and $n=2$, the group ${\rm Pic}(X)$ need not be generated by $\mathcal{O}_X (1)$ alone. For example, let $X = V (x_0 x_3 - x_1 x_2 ) \subset \P^3$ be the smooth quadric surface. Then, the Picard number of $X$ is $2$. In this case, the rank index of $(X,\mathcal{O}_X (2))$ is $3$ (cf. Example \ref{ex:quadric surface}). This shows that the assumption $n \geq 3$ in Theorem \ref{thm:main theorem 3} cannot be weakened.  \\

The organization of this paper is as follows.
Section 2 introduces the necessary preliminaries. We fix notation, recall the construction of rank 3 quadratic equations, and provide criteria for property $\QR(3)$ on Veronese varieties. These tools will be used throughout the paper.
Section 3 is devoted to the proof of Theorem \ref{thm:main theorem 1}. We establish the case $d=3$ separately and then proceed by a double induction on $n$ and $d$, re-proving several lemmas from \cite{HLMP} in characteristic 3 to complete the argument.
Section 4 contains the proofs of Theorems \ref{thm:main theorem 2} and \ref{thm:main theorem 3}. In particular, we compute the codimension of the span of rank 3 quadrics for the second Veronese embedding, and we study the second Veronese map of smooth complete intersections of quadrics in characteristic 3 to show that the bound in Theorem \ref{thm:main theorem 1}.(2) is optimal. \\

\vspace{0.2 cm}

\section{Preliminaries}
\noindent In this section, we begin by fixing notation, which we use throughout this paper, and making a few elementary remarks. Let $\mathbb{K}$ be an algebraically closed field with ${\rm char}(\mathbb{K}) \ne 2$.

\subsection{Construction of rank $3$ quadratic equations}
Let $X$ be a projective variety over $\mathbb{K}$, and let $L$ be a very ample line bundle on $X$ defining a linearly normal embedding
$$X \subset \P^r , \quad r = h^0 (X,L)-1.$$
In this subsection, we recall some basic facts about rank $3$ quadratic polynomials in the homogeneous ideal $I(X,L)$.

\begin{notation and remarks}\label{nar:producingQofR3}
Let $\varphi : H^0(X , L) \rightarrow H^0(\mathbb{P}^r, \mathcal{O}_{\mathbb{P}^r}(1))$ be the natural $\mathbb{K}$-linear isomorphism.

\renewcommand{\descriptionlabel}[1]%
             {\hspace{\labelsep}\textrm{#1}}
\begin{description}
\setlength{\labelwidth}{13mm} \setlength{\labelsep}{1.5mm}
\setlength{\itemindent}{0mm}

\item[{\rm (1)}] (Section 2 in \cite{P24}) Suppose that $L$ is decomposed as $L=L_1^{\otimes 2}\otimes L_2$ where $L_1$ and $L_2$ are line bundles on $X$ such that $p := h^0 (X,L_1 ) \geq 2$ and $q := h^0 (X,L_2 ) \geq 1$. We define the map
\begin{equation*}
Q_{L_1 , L_2} : H^0 (X,L_1 ) \times H^0 (X,L_1 ) \times H^0 (X,L_2 ) \rightarrow I(X,L)_2
\end{equation*}
by
\begin{equation*}
Q_{L_1 , L_2} (s,t,h) = \varphi (s \otimes s \otimes h) \varphi (t \otimes t \otimes h) - \varphi (s \otimes t \otimes h )^2 .
\end{equation*}
This map is well-defined, and $Q_{L_1 , L_2} (s,t,h)$ is either $0$ or else a rank $3$ quadratic equation of $X$. More precisely, $Q_{L_1 , L_2} (s,t,h)=0$ if and only if either $\{ s,t \}$ is $\mathbb{K}$-linearly dependent or else $h=0$.

\item[{\rm (2)}] (Proposition 3.1 in \cite{P24}) Suppose that $X$ is locally factorial, and let $Q \in I(X,L)_2$ be an element of rank $3$. Then there exist a decomposition
$$L=L_1^{\otimes 2}\otimes L_2$$
and an element $(s,t,h) \in H^0 (X,L_1 ) \times H^0 (X,L_1 ) \times H^0 (X,L_2 )$ such that
$$Q = Q_{L_1 , L_2} (s,t,h).$$

\item[{\rm (3)}] Let $W(L_1 , L_2 )$ denote the subspace of $I(X,L)_2$ generated by the image of $Q_{L_1 , L_2}$.

\item[{\rm (4)}] Let $\{s_1,s_2,\ldots, s_p\}$ and  $\{h_1,h_2,\ldots,h_q\}$ be any chosen bases for $H^0
(X,L_1 )$ and $H^0 (X,L_2 )$, respectively. Also, let
$$\begin{cases}
\Delta_1 := \{ (i,j)~|~ i,j \in \{1,\ldots ,p \},~i<j \},\\
\Delta_2 := \{ (i,j,k) ~|~ (i,j) \in \Delta_1 ,~k \in \{1,\ldots ,p \} - \{ i,j \} \},\\
\Delta_3 := \{ ( i,j,k, l)) ~|~ (i,j),(k, l) \in \Delta_1 ,~i<k ,~ j \neq k,l  \}, \quad \mbox{and} \\
H := \{ h_1 , \ldots , h_q \} \cup \{ h_i + h_j ~|~ 1 \le i < j \le q \}.
\end{cases}$$
We define three types of finite subsets of the image of $Q_{L_1 , L_2}$ as follows:
$$\begin{cases}
\Gamma_{11}=\big\{Q_{L_1 , L_2} (s_i,s_j,h) ~|~ (i,j) \in \Delta_1 , ~h \in H \big\}\\
\Gamma_{12}=\big\{Q_{L_1 , L_2} (s_i + s_j , s_k,h)~|~ (i,j,k) \in \Delta_2 , ~h \in H  \big\}\\
\Gamma_{22}=\big\{Q_{L_1 , L_2} (s_i+s_j,s_k +s_l,h )~|~ (i,j,k,l)  \in \Delta_3 , ~h \in H \big\}
\end{cases}$$
Let $\Gamma (L_1 , L_2)$ be the union $\Gamma_{11}\cup \Gamma_{12} \cup \Gamma_{22}$.

\item[{\rm (5)}] (Theorem 2.2 in \cite{HLMP}) The space $W(L_1 , L_2)$ is generated by $\Gamma (L_1 , L_2)$.

\item[{\rm (6)}] The map $Q_{L_1 , L_2}$ induces a finite morphism
\begin{equation*}
\widetilde{Q_{L_1 , L_2}} : \mathbb{G} \left( 1,\P H^0 (X,L_1 ) \right) \times \P H^0 (X,L_2 ) \rightarrow \P \left( I(X,L)_2 \right) 
\end{equation*}
(cf. Theorem 1.2 in \cite{P24}).
In the case of $(\mathbb{P}^n, \mathcal{O}_{\mathbb{P}^n}(d))$, we refer the reader to see Theorem 1.3 in \cite{PS25} for a few basic properties related to this morphism.

\end{description}
\end{notation and remarks}

\subsection{Criteria of Property $\QR (3)$ on Veronese varieties}
For $n \geq 1$ and $d \geq 2$, let
$$V_{n,d} := \nu_d (\mathbb{P}^n ) \subset \mathbb{P}^N ,~ N = {{n+d} \choose n}-1,$$
denote the $d$th Veronese variety. In this subsection, we fix some notation related to the Veronese varieties and provides some criteria of Property $\QR (3)$ on Veronese varieties.

\begin{notation and remarks}\label{nar:Veronese}
Define the index set
\begin{equation*}
A(n,d) := \left\{ I = (a_0, \dots, a_n) \in \mathbb{Z}_{\geq 0}^{n+1} \,\middle|\, a_0 + \cdots + a_n = d \right\},
\end{equation*}
and let
\begin{equation*}
B(n,d) := \{ z_I ~|~ I \in A(n,d) \}
\end{equation*}
be the set of standard homogeneous coordinates of th projective space $\P^N$.

\renewcommand{\descriptionlabel}[1]%
             {\hspace{\labelsep}\textrm{#1}}
\begin{description}
\setlength{\labelwidth}{13mm}
\setlength{\labelsep}{1.5mm}
\setlength{\itemindent}{0mm}

\item[$(1)$] It is well known that $I(V_{n,d})$, the homogeneous ideal of $V_{n,d}$, is generated by
\begin{equation*}
\mathcal{Q} (n,d) := \{ z_I z_J - z_K z_L ~|~ I,J,K,L \in  A(n,d)
\quad \mbox{and} \quad I+J = K+L \}.
\end{equation*}
In particular, $(\P^n , \mathcal{O}_{\P^n} (d))$ satisfies property $\QR(4)$.

\item[$(2)$] Let $W (n,d)$ be the subspace of $I(V_{n,d})_2$ generated by
\begin{equation*}
\bigcup_{1 \leq \ell \leq d/2} W (\mathcal{O}_{\P^n} (\ell) , \mathcal{O}_{\P^n} (d-2\ell) ).
\end{equation*}
Also, let $\Gamma (n,d)$ be the union of $\Gamma (\mathcal{O}_{\P^n} (\ell) , \mathcal{O}_{\P^n} (d-2\ell) )$ for $1 \leq \ell \leq d/2$. By Notation and Remarks \ref{nar:producingQofR3}.$(3)$ and $(5)$, it holds that

\begin{enumerate}
\item[$(a)$] $W (n,d)$ is the smallest subspace of $I(V_{n,d})_2$ which contains all rank $3$ quadratic forms in $I(V_{n,d})$; and
\item[$(b)$] $W (n,d)$ is generated by the finite set $\Gamma (n,d)$.
\end{enumerate}

\item[$(3)$] For two quadratic forms $Q_1 , Q_2$ in $I(V_{n,d})$, we write $Q_1 \sim Q_2$ if $Q_1 - Q_2 \in W (n,d)$, or equivalently, if $Q_1 - Q_2$ is a linear combination of rank $3$ quadratic forms in $I(V_{n,d})$. In this case, we say that $Q_1$ and $Q_2$ are \textbf{equivalent}.
\end{description}
\end{notation and remarks}

\begin{proposition}\label{prop:QR(3) criteria}
For $V_{n,d} \subset \P^N$, the following statements are equivalent:

\begin{enumerate}
\item[$(i)$] $(\P^n , \mathcal{O}_{\P^n} (d))$ satisfies property $\QR(3)$.
\item[$(ii)$] $W (n,d) = I(V_{n,d})_2$.
\item[$(iii)$] $\dim_{\mathbb{K}} ~\langle \Gamma (n,d) \rangle = \dim_{\mathbb{K}} ~ I(V_{n,d})_2$.
\item[$(iv)$] For any $I, J, K, L \in A(n,d)$ with $I + J = K + L$, it holds that $z_I z_J \sim z_K z_L$.
\end{enumerate}
\end{proposition}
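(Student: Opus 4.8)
The plan is to establish that the four conditions are equivalent through the chain $(i)\Leftrightarrow(ii)$, $(ii)\Leftrightarrow(iii)$, and $(ii)\Leftrightarrow(iv)$, treating condition $(ii)$ — the statement $W(n,d)=I(V_{n,d})_2$ — as the central hub. Throughout I will lean on the two structural facts already recorded: that $W(n,d)$ is the smallest subspace of $I(V_{n,d})_2$ containing every rank $3$ quadric of $I(V_{n,d})$ and is spanned by the finite set $\Gamma(n,d)$ (Notation and Remarks~\ref{nar:Veronese}.$(2)$), and that $I(V_{n,d})$ is generated as an ideal by the quadrics in $\mathcal{Q}(n,d)$, so that $I(V_{n,d})_2$ is their $\kk$-span (Notation and Remarks~\ref{nar:Veronese}.$(1)$).

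For $(i)\Leftrightarrow(ii)$, the first step is to observe that because $I(V_{n,d})$ is generated in degree $2$, property $\QR(3)$ is equivalent to the assertion that the vector space $I(V_{n,d})_2$ is spanned by the quadrics of rank $\leq 3$ that it contains. The key reduction is then that a quadric of rank $\leq 3$ lying in $I(V_{n,d})$ is either zero or of rank exactly $3$: a nonzero quadric of rank $1$ or $2$ factors as a product of (one or two) linear forms, and since $V_{n,d}$ is irreducible and nondegenerate, its homogeneous coordinate ring is an integral domain containing no nonzero linear form that vanishes on $V_{n,d}$, so no such product can lie in $I(V_{n,d})$. Granting this, the span of the rank $\leq 3$ quadrics in $I(V_{n,d})$ coincides with $W(n,d)$, and $(i)$ becomes exactly $I(V_{n,d})_2 = W(n,d)$.

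The remaining two equivalences are formal. For $(ii)\Leftrightarrow(iii)$, since $W(n,d)=\langle\Gamma(n,d)\rangle$ we have $\dim_{\kk} W(n,d)=\dim_{\kk}\langle\Gamma(n,d)\rangle$, and because the inclusion $W(n,d)\subseteq I(V_{n,d})_2$ always holds, equality of these two subspaces is equivalent to equality of their dimensions, which is precisely $(iii)$. For $(ii)\Leftrightarrow(iv)$, unwinding the definition of $\sim$, condition $(iv)$ says exactly that every generator $z_Iz_J-z_Kz_L$ of $I(V_{n,d})_2$ lies in $W(n,d)$; since these generators span $I(V_{n,d})_2$ and $W(n,d)$ is a subspace, this is equivalent to $I(V_{n,d})_2\subseteq W(n,d)$, and hence, combined with the reverse inclusion, to $(ii)$.

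The only genuinely nonformal point — and thus the main thing to get right — is the rank reduction inside $(i)\Leftrightarrow(ii)$: namely that $I(V_{n,d})$ harbors no quadrics of rank $1$ or $2$, so that "generated by rank $3$ quadrics" and "generated by quadrics of rank $\leq 3$" coincide. Everything else amounts to carefully distinguishing "generated as an ideal" from "spanned as a degree-$2$ vector space" and tracking the defining inclusions $\langle\Gamma(n,d)\rangle = W(n,d)\subseteq I(V_{n,d})_2$.
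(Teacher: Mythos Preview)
Your argument is correct and is exactly what the paper intends: its proof reads in full ``The proof follows directly from Notation and Remarks \ref{nar:producingQofR3} and \ref{nar:Veronese},'' and you have simply unpacked those references into the three equivalences $(i)\Leftrightarrow(ii)$, $(ii)\Leftrightarrow(iii)$, $(ii)\Leftrightarrow(iv)$. Your explicit observation that $I(V_{n,d})$ contains no quadrics of rank $1$ or $2$ (via irreducibility and nondegeneracy) is a point the paper leaves implicit but which is indeed needed to match ``rank $\leq 3$'' in the definition of $\QR(3)$ with ``rank $3$'' in the description of $W(n,d)$.
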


\begin{proof}
The proof follows directly from Notation and Remarks \ref{nar:producingQofR3} and \ref{nar:Veronese}.
\end{proof}

\section{Proof of Theorem \ref{thm:main theorem 1}}
\noindent Throughout this section, we assume that ${\rm char}(\mathbb{K})=3$. This section aims to prove Theorem \ref{thm:main theorem 1}.$(1)$. As outlined in the Introduction, our approach to proving this theorem relies on \textbf{Step 2*} and \textbf{Step 3*}.

In Proposition \ref{boundlemma} below, we will prove \textbf{Step 2*}.

\begin{proposition}\label{boundlemma}
The rank index of $(\P^n ,\mathcal{O}_{\P^n} (3))$ is $3$ for all $n \geq 1$.
\end{proposition}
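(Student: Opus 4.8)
The plan is to verify condition $(iv)$ of Proposition \ref{prop:QR(3) criteria} for $V_{n,3}$: for every pair of monomials $z_I z_J$ and $z_K z_L$ with $I+J=K+L$ (where $I,J,K,L \in A(n,3)$), one must show $z_I z_J \sim z_K z_L$, i.e. their difference lies in $W(n,3)$. Since property $\QR(3)$ for $(\P^1 , \mathcal{O}_{\P^1}(d))$ is already established (this is \textbf{Step 1}, known for all $d \geq 2$), the main work is to handle the genuinely multivariable monomial relations that do not reduce to a single-variable subcomputation. First I would set up the reduction machinery: the binomial $z_I z_J - z_K z_L$ corresponds to a \emph{move} on the multiset $I+J$ of total degree $6$, and any such relation factors as a composition of elementary moves that transfer a single variable-power between two of the four degree-$3$ index vectors. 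So it suffices to show $z_I z_J \sim z_K z_L$ when $I,J,K,L$ differ only in a small, controlled way.

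The key computational engine is the map $Q_{L_1,L_2}$ from Notation and Remarks \ref{nar:producingQofR3}, specialized to $(\P^n , \mathcal{O}_{\P^n}(3))$ via the decomposition $\mathcal{O}_{\P^n}(3) = \mathcal{O}_{\P^n}(1)^{\otimes 2} \otimes \mathcal{O}_{\P^n}(1)$ (here $L_1 = L_2 = \mathcal{O}_{\P^n}(1)$, so $p = q = n+1$). Writing $x_0,\dots,x_n$ for a basis of $H^0(\P^n , \mathcal{O}_{\P^n}(1))$ and identifying $z_I$ with the degree-$3$ monomial indexed by $I$, the generator $Q_{L_1,L_2}(s,t,h)$ produces rank $3$ quadrics of the shape $z_{s^2 h}\, z_{t^2 h} - z_{sth}^2$. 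I would first catalogue the relations these generators yield directly, then use the explicit basis-enlargement elements $H = \{h_i\} \cup \{h_i + h_j\}$ and the linear-combination tricks encoded in $\Gamma_{11}, \Gamma_{12}, \Gamma_{22}$ (Notation and Remarks \ref{nar:producingQofR3}.$(4)$) to polarize and thereby generate the off-diagonal relations needed to connect arbitrary $z_I z_J$ to $z_K z_L$.

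The expected main obstacle is precisely the characteristic $3$ phenomenon that forces $(\P^n , \mathcal{O}_{\P^n}(2))$ to fail $\QR(3)$. When one polarizes a quadratic expression over a field, the cross terms come with binomial coefficients, and in characteristic $3$ the coefficient $\binom{3}{k}$ (or a $3$ appearing in expanding cubes) can vanish, collapsing certain relations that are available in characteristic $\neq 2,3$. Concretely, relations involving three or four distinct variables in the index — the analogues of the $\Gamma_{12}$ and $\Gamma_{22}$ families — are where the degeneracy bites, and these are exactly the relations absent at $d=2$. The crux of the argument is to show that having the \emph{extra} factor available (degree $3 = 2+1$ rather than $2 = 2+0$, so $q \geq 2$ genuinely) supplies enough freedom: using sums $h_i + h_j$ from $H$ one can isolate the needed cross terms even when the naive characteristic-$3$ expansion kills the diagonal polarization.

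I would therefore organize the proof as a case analysis on the number of distinct variables appearing in the support of $I+J$ (equivalently, how spread out the degree-$6$ multiset is), reducing the many-variable cases to configurations on two or three variables where the $Q_{L_1,L_2}$ generators and their $\Gamma$-combinations can be written down explicitly, and then checking by hand that each such configuration yields the desired equivalence $z_I z_J \sim z_K z_L$ in characteristic $3$. The base cases of this reduction coincide with the $n=1$ result already in hand, and the inductive/combinatorial bookkeeping — verifying that every degree-$6$ move decomposes into the explicitly-handled elementary moves without ever requiring a relation that the characteristic $3$ degeneracy has destroyed — is the step I expect to demand the most care.
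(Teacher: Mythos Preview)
Your plan takes a genuinely different route from the paper. The paper does \emph{not} attempt any hand case analysis for $d=3$: it first invokes \cite[Proposition~3.3]{HLMP} to reduce the statement for all $n$ to the range $1 \le n \le 5$ (this is exactly the observation that $|\Supp(I+J)| \le 6$ when $I,J \in A(n,3)$), and then verifies criterion $(iii)$ of Proposition~\ref{prop:QR(3) criteria} --- the dimension equality $\dim_{\mathbb{K}} \langle \Gamma(n,3) \rangle = \dim_{\mathbb{K}} I(V_{n,3})_2$ --- for $n=2,3,4,5$ by a Macaulay2 computation. Your proposal instead targets criterion $(iv)$ and aims to produce explicit equivalences $z_I z_J \sim z_K z_L$ by polarization identities. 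If carried out, that would yield a human-readable, computer-free proof; the paper's approach buys brevity and certainty at the cost of relying on machine verification.

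There is, however, a concrete gap. Your claimed reduction to ``configurations on two or three variables'' is too optimistic: a relation $I+J=K+L$ with $I,J,K,L \in A(n,3)$ can involve up to six variables, so the base configurations live on $\P^5$, not $\P^2$ or $\P^3$. More seriously, the combinatorial move-decomposition machinery you allude to is precisely what \cite[Lemmas~4.3--4.7]{HLMP} (re-proved here as Lemma~\ref{lem:summary of lemmas}) supply, but those lemmas are designed for the inductive step \textbf{Step~3*} and their proofs use the hypothesis $(\dagger)$ for smaller $d'$; at $d=3$ the only smaller value is $d'=2$, which \emph{fails} $\QR(3)$ in characteristic $3$ for $n \ge 3$, so that route is blocked. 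Your heuristic that the extra factor $h$ (having $q = n+1 \ge 2$) should let sums $h_i + h_j$ recover the cross terms killed by characteristic $3$ is plausible, but you supply no identity actually demonstrating it, and the paper does not either --- it sidesteps the issue entirely via the dimension count. Note that $\dim_{\mathbb{K}} I(V_{5,3})_2 = \binom{57}{2} - \binom{11}{6} = 1134$, so a direct hand verification at $n=5$ is not a small undertaking; you would need genuinely new structural identities at $d=3$ that do not presuppose the $d=2$ case, and none are provided.
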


\begin{proof}
We begin by recalling Proposition 3.3 in \cite{HLMP}, which states that if $I(V_{5 ,3})$ is generated by $\Gamma (\mathcal{O}_{\P^{5}}
(1),\mathcal{O}_{\P^{5}} (1))$, then $(\P^n ,\mathcal{O}_{\P^n} (3))$ satisfies property $\QR(3)$ for all $n \geq 5$. Therefore, it suffices to prove that the rank index of $(\P^n ,\mathcal{O}_{\P^n} (3))$ is $3$ for $1 \leq n \leq 5$.

For $n=1$, we refer the reader to \cite[Corollary 2.4]{HLMP}. For $2 \leq n \leq 5$, we apply Proposition \ref{prop:QR(3) criteria}.$(iii)$. More precisely, recall that
\begin{equation*}
\Gamma (n,3) = \Gamma(\mathcal{O}_{\mathbb{P}^n}(1), \mathcal{O}_{\mathbb{P}^n}(1))
\end{equation*}
and
\begin{equation*}
\dim_{\mathbb{K}} ~ I(V_{n,3})_2 = {{n+3 \choose 3}+1 \choose 3}-{n+6 \choose 6}.
\end{equation*}
The dimension of $\langle \Gamma (n,3) \rangle$ can be computed using Macaulay2 (cf \cite{GS}). The Macaulay2 code used for this computation is available at \cite{LPS}.

Thus, by verifying the desired equality
$$\dim_{\mathbb{K}} ~\langle \Gamma (n,3) \rangle = \dim_{\mathbb{K}} ~ I(V_{n,3})_2 \quad \mbox{for} \quad 2 \leq n \leq 5,$$
we conclude that $(\P^n ,\mathcal{O}_{\P^n} (3))$ satisfies property $\QR(3)$ for all $n \geq 1$.
\end{proof}

In the remainder of this section, we shall prove the following result.

\begin{theorem}\label{thm:finite generating set of rank 3 quadrics}
$I(V_{n,d})$ is generated by $\Gamma (\mathcal{O}_{\P^n} (1),\mathcal{O}_{\P^n} (d-2))$ for all $n \geq 1$ and $d \geq 3$.
\end{theorem}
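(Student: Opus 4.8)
The plan is to prove the equivalent assertion that
\[
W' := W(\mathcal{O}_{\P^n}(1),\mathcal{O}_{\P^n}(d-2))
\]
equals $I(V_{n,d})_2$. Indeed, by Notation and Remarks \ref{nar:producingQofR3}.(5) this subspace is spanned by $\Gamma(\mathcal{O}_{\P^n}(1),\mathcal{O}_{\P^n}(d-2))$, and since $I(V_{n,d})$ is generated in degree $2$, the equality $W'=I(V_{n,d})_2$ is exactly the statement of the theorem. The first step is to reduce to a multidegree-by-multidegree problem. The image of $Q_{\mathcal{O}_{\P^n}(1),\mathcal{O}_{\P^n}(d-2)}$ is carried into itself by the diagonal torus $(\mathbb{K}^{*})^{n+1}$ acting on $x_0,\dots,x_n$, since a torus element sends $Q(s,t,h)$ to $Q(gs,gt,gh)$, which is again a quadric of the same shape. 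Hence $W'$ is a torus subrepresentation of $I(V_{n,d})_2$ and therefore decomposes as a direct sum of its multidegree pieces indexed by $M\in A(n,2d)$. As $I(V_{n,d})_2$ is spanned by the binomials $z_Iz_J-z_Kz_L$ with $I+J=K+L=M$, it suffices to show, for each fixed $M$, that any two degree-$d$ splittings $(I,J)$ and $(K,L)$ of $M$ satisfy $z_Iz_J\equiv z_Kz_L\pmod{W'}$ (compare Proposition \ref{prop:QR(3) criteria}.(iv)).

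I would then argue by double induction on $n\geq 1$ and $d\geq 3$. The base case $d=3$ is Proposition \ref{boundlemma}, since there $\Gamma(n,3)=\Gamma(\mathcal{O}_{\P^n}(1),\mathcal{O}_{\P^n}(1))$ and thus $W'=W(n,3)$; the base case $n=1$ is the rational normal curve, which is \cite[Corollary 2.4]{HLMP}. For the inductive step I would connect two splittings of $M$ through a chain of elementary moves $(I,J)\mapsto(I+e_a-e_b,\,J-e_a+e_b)$ and exhibit two reductions. First, a \emph{variable reduction}: if $M$ is not supported on every coordinate, say $m_c=0$, then all splittings lie in a coordinate $\P^{n-1}$, the relevant rank-$3$ quadrics are inherited from $V_{n-1,d}$, and the induction hypothesis on $n$ applies; restricting to the support of $M$ in this way reduces an arbitrary $n$ to $n\leq 2d-1$. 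Second, a \emph{degree reduction}: if an elementary move can be routed so that a single variable $x_f$ divides both coordinates of both monomials, one factors out $x_f$ to obtain the corresponding relation in $V_{n,d-1}$ via $z_K\mapsto z_{K-e_f}$, which respects the $\ell=1$ layers through $h\mapsto x_f h$, so the induction hypothesis on $d$ applies.

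These reductions dispose of every move except the \emph{primitive} ones, and here lies the main obstacle. After stripping off a common degree-$(d-2)$ part $P$, the essential primitive case amounts to comparing the three pairings of four distinct indices $a,b,c,d$, namely
\[
u=z_{e_a+e_b+P}\,z_{e_c+e_d+P},\quad v=z_{e_a+e_c+P}\,z_{e_b+e_d+P},\quad w=z_{e_a+e_d+P}\,z_{e_b+e_c+P},
\]
and one must establish $u\equiv v\equiv w\pmod{W'}$. The difficulty is that the obvious rank-$3$ quadric producing these terms, the multidegree-$(e_a+e_b+e_c+e_d+2P)$ component of $Q_{\mathcal{O}_{\P^n}(1),\mathcal{O}_{\P^n}(d-2)}(x_a+x_c,\,x_b+x_d,\,x^{P})$, equals $4v-2u-2w$, which in characteristic $3$ collapses to the purely \emph{symmetric} relation
\[
u+v+w\equiv 0\pmod{W'}.
\]
This single relation is all that the diagonal construction yields; when $P=0$ (the case $d=2$) it is precisely the ${n+1\choose 4}$-dimensional obstruction of Theorem \ref{thm:main theorem 2} and the reason Step 2 of \cite{HLMP} excludes characteristic $3$. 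In more structural terms, the phenomenon is that the full polarizations available for $\mathrm{char}(\mathbb{K})\neq 2,3$ require dividing by factorials that vanish modulo $3$, so the antisymmetric combinations separating $u,v,w$ become inaccessible from the rank-$3$ quadrics alone.

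The whole force of the hypothesis $d\geq 3$ is that $P$ now has positive degree and may be varied, which is the symmetry-breaking device unavailable at $d=2$. Concretely, I would feed \emph{sums} $h=x^{P}+x^{P'}$ of distinct degree-$(d-2)$ monomials into $Q_{\mathcal{O}_{\P^n}(1),\mathcal{O}_{\P^n}(d-2)}$ and extract the cross multidegree component $e_a+e_b+e_c+e_d+P+P'$; the resulting mixed relations treat the three pairings asymmetrically, and combining them with the induction hypotheses (which resolve the sub-relations produced by the mixing) and with the symmetric relation above should force $u\equiv v\equiv w$. Carrying out this final step is exactly what requires re-proving, in characteristic $3$ and using only the partial polarizations that remain valid there, the handful of lemmas of \cite{HLMP} originally established under $\mathrm{char}(\mathbb{K})\neq 2,3$; this is the technical heart of the argument and the step I expect to be hardest.
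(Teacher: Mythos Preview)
Your overall architecture coincides with the paper's: double induction on $n\ge 1$ and $d\ge 3$ with base cases $n=1$ (\cite[Corollary 2.4]{HLMP}) and $d=3$ (Proposition \ref{boundlemma}); your ``variable reduction'' and ``degree reduction'' are exactly the two halves of \cite[Lemma 4.4]{HLMP} (Lemma \ref{lem:summary of lemmas}.(2) here); and your diagnosis of the characteristic-$3$ obstruction---that $4v-2u-2w$ collapses to the symmetric relation $u+v+w\equiv 0$ in the four-index case, which is precisely why $d=2$ fails---is correct and is the heart of the matter.

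The gap is that you stop at the diagnosis. You propose to break the symmetry by feeding sums $x^{P}+x^{P'}$ into $Q$ and then invoking the induction hypotheses on the resulting cross-terms, but you do not carry this out, and you explicitly flag re-proving the relevant lemmas of \cite{HLMP} as the hard step left undone. The paper fills this gap, and the good news is that far less needs to be redone than you might fear: of \cite[Lemmas 4.3--4.7]{HLMP}, all survive verbatim in characteristic $3$ once $d\ge 3$, \emph{except} Lemma 4.6 (the move $[I,J]\sim[-e_0+e_3+I,\,e_0-e_3+J]$ under $I_0,I_1,J_2,J_3\ge 1$ with $I_1\ge 2$ or $J_2\ge 2$). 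The paper gives a new proof of this one lemma by applying the polarization identity of \cite[Lemma 4.3(5)]{HLMP}---which, in line with your intuition, genuinely uses two distinct tails $M,N\in A(n,d-2)$ and is therefore unavailable at $d=2$---to produce a four-term relation, and then cancelling two of the terms via the common-support case of Lemma 4.4. With Lemma 4.6 thus re-established, the twelve-case analysis of \cite{HLMP} applies without further change. So your plan is right; what remains is to locate the single failing lemma and supply the short replacement argument just described.
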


To prove Theorem \ref{thm:finite generating set of rank 3 quadrics}, we will proceed by double induction on $n \geq 1$ and $d \geq 3$. Indeed,
Theorem \ref{thm:finite generating set of rank 3 quadrics} has already been established for $n=1$ in \cite[Corollary 2.4]{HLMP} and for $d=3$ in Proposition \ref{boundlemma}. From now on, let $n \geq 2$ and $d\geq 4$. Also we assume that \\

\begin{enumerate}
\item[$(\dagger)$] the statement of Theorem \ref{thm:finite generating set of rank 3 quadrics}
is true for $(\P^{n'} , \mathcal{O}_{\P^{n'}} (d'))$ whenever $n' < n$ and $3 \leq d' < d$. \\
\end{enumerate}

\noindent Under these assumptions, we will first recall several lemmas from \cite{HLMP} that were used in the proof of Theorem \ref{thm:HLMP}.$(1)$.

\begin{notation and remarks}\label{nar:Veronese Addition}
\renewcommand{\descriptionlabel}[1]%
             {\hspace{\labelsep}\textrm{#1}}
\begin{description}
\setlength{\labelwidth}{13mm}
\setlength{\labelsep}{1.5mm}
\setlength{\itemindent}{0mm}

\item[$(1)$] Let $I,J$ be elements of $A(n,d)$. For simplicity, we denote the monomial $z_I z_J$ by $[I, J]$. For instance, an element $z_I z_J - z_K z_L$ of $\mathcal{Q} (n,d)$ is denoted by $[I,J]-[K,L]$.
\item[$(2)$]  We write an element $I \in A(n,d)$ as $I = (I_0 , I_1 , \ldots , I_n )$.
\item[$(3)$] For $I \in  A(n,d)$, we define $\Supp(I)$ as the set $\{ k ~|~ 0 \leq k \leq n, ~I_k >0 \}$.
\item[$(4)$] For each $0 \leq i \leq n$, let $e_i \in A(n,1)$ denote the element whose $i$th entry is $1$ and all others are zero.
\end{description}
\end{notation and remarks}

\begin{lemma}\label{lem:summary of lemmas}
Let $n>1$ and $d>3$, and let $I, J, K, L \in A(n,d)$ be such that $I + J = K + L$.

\renewcommand{\descriptionlabel}[1]%
             {\hspace{\labelsep}\textrm{#1}}
\begin{description}
\setlength{\labelwidth}{13mm}
\setlength{\labelsep}{1.5mm}
\setlength{\itemindent}{0mm}

\item[$(1)$] $($Lemma 4.3.$(5)$ in \cite{HLMP}$)$ Suppose that $d \geq 3$, and let $M,N \in A(n,d-2)$. Then the following equivalence holds:
\begin{equation*}
\begin{CD}
[2e_i + M, e_j + e_k + N] + [e_j + e_k + M, 2e_i + N] \\
\quad \quad \quad \quad \quad \quad \sim [e_i + e_j + M, e_i + e_k + N] + [e_i + e_k + M, e_i + e_j + N].
\end{CD}
\end{equation*}
\item[$(2)$] $($Lemma 4.4 in \cite{HLMP}$)$ Suppose that
$$\mathrm{Supp}\, I \cap \mathrm{Supp}\, J \cap \mathrm{Supp}\, K \cap \mathrm{Supp}\, L \neq \emptyset$$
or
$$\mathrm{Supp}\, I^c \cap \mathrm{Supp}\, J^c \cap \mathrm{Supp}\, K^c \cap \mathrm{Supp}\, L^c \neq \emptyset.$$
Then $[I, J] \sim [K, L]$.
\item[$(3)$] $($Lemma 4.5 in \cite{HLMP}$)$ If $I_0 \geq 3$ and $J_1, J_2 \geq 1$, then
\begin{equation*}
[I, J] \sim [-2e_0 + e_1 + e_2 + I,\ 2e_0 - e_1 - e_2 + J].
\end{equation*}
\item[$(4)$] $($Lemma 4.6 in \cite{HLMP}$)$ If $I_0, I_1, J_2, J_3 \geq 1$ and either $I_1 \geq 2$ or $J_2 \geq 2$, then
\[
[I, J] \sim [-e_0 + e_3 + I,\ e_0 - e_3 + J].
\]
\item[$(5)$] $($Lemma 4.7 in \cite{HLMP}$)$ If $I_0 = I_1 = K_0 = K_1 = 1$ and $J_0 = J_1 = L_0 = L_1 = 0$, then
$[I, J] \sim [K, L]$.
\end{description}
\end{lemma}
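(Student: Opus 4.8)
The plan is to prove all five equivalences by exhibiting the relevant difference of monomials as an explicit $\mathbb{K}$-linear combination of rank $3$ quadrics lying in $W(n,d)$. By Notation and Remarks \ref{nar:Veronese}.$(2)$, $W(n,d)$ contains every quadric $Q_{\mathcal{O}_{\P^n}(1),\mathcal{O}_{\P^n}(d-2)}(s,t,h)$ with $s,t$ linear and $\deg h = d-2$; writing $\varphi(x_i x_j\, x^P) = z_{e_i+e_j+P}$ for $P \in A(n,d-2)$, the choice $s=x_i,\ t=x_j,\ h=x^P$ gives the quadric $[2e_i+P,\,2e_j+P]-[e_i+e_j+P,\,e_i+e_j+P]$. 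The entire argument then reduces to assembling each asserted relation out of these building blocks, carried out inside the double induction on $(n,d)$ already in force, so that the inductive hypothesis $(\dagger)$ and Proposition \ref{boundlemma} are available at the base instances.

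First I would record two base relations obtained by direct expansion. Taking $s=x_i,\ t=x_j,\ h=x^P$ yields the \emph{pure relation}
\begin{equation*}
[2e_i+P,\,2e_j+P]\ \sim\ [e_i+e_j+P,\,e_i+e_j+P].
\end{equation*}
Taking instead $h = x^M+x^N$ (a sum of two basis monomials, as permitted in Notation and Remarks \ref{nar:producingQofR3}.$(4)$) and subtracting the two pure quadrics with $h=x^M$ and $h=x^N$ yields the \emph{cross relation}
\begin{equation*}
[2e_i+M,\,2e_j+N]+[2e_i+N,\,2e_j+M]\ \sim\ 2\,[e_i+e_j+M,\,e_i+e_j+N].
\end{equation*}
The only arithmetic used so far is the invertibility of $2$, which holds in characteristic $3$.

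For part $(1)$ the model computation is to expand $Q_{\mathcal{O}_{\P^n}(1),\mathcal{O}_{\P^n}(d-2)}(x_i,\,x_j+x_k,\,x^M+x^N)$, subtract the two quadrics with $h=x^M$ and $h=x^N$, and then remove all surviving terms in which both index vectors are of pure type $2e_a+(\cdots)$ by means of the cross relation; dividing the outcome by $2$ produces exactly the stated equivalence. Parts $(2)$–$(5)$ follow the same philosophy. For part $(2)$ one uses the common index in the four supports (or in their complements) to reduce to relations among fewer variables and then to the pure relation. Parts $(3)$ and $(4)$ are ``transport'' relations that move exponent mass between the two factors; I would obtain them by chaining instances of part $(1)$ with the cross relation, and, whenever the effective degree drops, invoking $(\dagger)$. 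Finally, part $(5)$, which pins the first two exponents at $1$ and forbids them elsewhere, I would reduce to a statement purely about $x_2,\dots,x_n$ and close using $(\dagger)$.

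The point demanding genuine care — and the only place the characteristic enters — is \textbf{the coefficient bookkeeping in characteristic $3$}. The proofs in \cite{HLMP} are written for $\mathrm{char}(\mathbb{K})\neq 2,3$, and the hazard is any step that isolates a single monomial product by forming a combination of quadrics whose scalar coefficient is divisible by $3$, or that clears a factor of $3$. My main task is therefore to audit each derivation and re-route it so that the only scalar ever inverted is $2$: wherever a factor of $3$ threatens to appear, I would replace the offending combination by one built from the cross relation, whose leading coefficient is $2$. I expect the transport relations $(3)$ and $(4)$ to be the delicate cases, since their strict hypotheses — $I_0\ge 3$ in $(3)$, and ``$I_1\ge 2$ or $J_2\ge 2$'' in $(4)$ — are precisely the extra slack that lets one peel off an additional basis vector and thereby avoid a $3$-divisible coefficient; checking that this rerouting goes through cleanly is the crux of the proof.
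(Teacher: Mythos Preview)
Your proposal is a reasonable strategic outline, but it stops precisely where the actual content of the lemma lies, and it misidentifies where the characteristic-$3$ obstruction occurs. The paper's treatment is sharper on both counts.

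First, the paper observes that parts $(1)$, $(2)$, $(3)$, $(5)$ require \emph{no} new argument: the original proofs of Lemmas~4.3.(5), 4.4, 4.5, 4.7 in \cite{HLMP} go through verbatim once $d=2$ is excluded, because none of them ever divides by $3$. So your expectation that $(3)$ is delicate is mistaken---the hypothesis $I_0\ge 3$ in $(3)$ is not there to dodge a factor of $3$, and no rerouting is needed. Your plan to re-derive all five parts from the pure and cross relations, auditing every coefficient, would eventually succeed for $(1)$, $(2)$, $(3)$, $(5)$, but it is considerably more labour than simply citing \cite{HLMP}.

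Second, and more importantly, part $(4)$ is the \emph{only} place where the \cite{HLMP} argument genuinely breaks in characteristic~$3$, and here your proposal does not supply a proof: you write that ``checking that this rerouting goes through cleanly is the crux,'' which is an acknowledgement that the crux has not been carried out. The paper's argument for $(4)$ is not a direct manipulation of pure/cross relations as you sketch, but instead bootstraps from the already-established parts $(1)$ and $(2)$. Concretely (say in the case $I_1\ge 2$): one applies part $(1)$ with a suitable choice of $M,N$ and indices to obtain
\[
[I,J]+[I',J']\ \sim\ [e_0-e_1+I,\,-e_0+e_1+J]+[-e_0+e_3+I,\,e_0-e_3+J],
\]
where $[I',J']=[-e_1+e_3+I,\,e_0+e_1-e_0-e_3+J]$ is chosen so that the index $1$ lies in the support of all four of $I',\,J',\,e_0-e_1+I,\,-e_0+e_1+J$. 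Part $(2)$ then gives $[I',J']\sim[e_0-e_1+I,\,-e_0+e_1+J]$, and subtraction yields the claim. The point is that one never isolates a single bracket with a coefficient of $3$; instead one produces a \emph{pair} of brackets via $(1)$ and cancels the unwanted one via $(2)$. Your outline of ``chaining instances of part $(1)$ with the cross relation'' does not mention this use of $(2)$, and without it there is no visible mechanism to kill the auxiliary term.
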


The proofs of Lemmas 4.3.$(5)$, 4.4, 4.5, and 4.7 in \cite{HLMP} remain valid in characteristic $3$, provided that the exceptional case $d=2$ is excluded; we therefore omit the details.

Next, we consider Lemma 4.6 in \cite{HLMP}. Indeed, this lemma also remains valid in characteristic $3$, provided that the range of $d$ is suitably adjusted. However, the argument used in the original proof does not apply in this setting; accordingly, we provide a different proof. \\

\noindent {\bf Proof of Lemma \ref{lem:summary of lemmas}.$(4)$.} When $I_1 \geq 2$, let
$$[I', J'] = [e_0+e_3+(I-e_0-e_1), e_0+e_1+(J-e_0-e_3)].$$
Then by Lemma \ref{lem:summary of lemmas}.$(1)$, it holds that
\begin{align*}
[I, J] + [I', J'] &= [e_0+e_1+(I-e_0-e_1), e_0+e_3+(J-e_0-e_3)] \\
                  & \quad +[e_0+e_3+(I-e_0-e_1), e_0+e_1+(J-e_0-e_3)] \\
                  &= [2e_0+(I-e_0-e_1), e_1+e_3+(J-e_0-e_3)] \\
                  & \quad +[e_1+e_3+(I-e_0-e_1), 2e_0+(J-e_0-e_3)] \\
                  &= [e_0-e_1+I, -e_0+e_1+J]+[-e_0+e_3+I, e_0-e_3+J] .
\end{align*}
Since $I'_1, J'_1 \geq 1$, it follows from Lemma \ref{lem:summary of lemmas}.$(2)$ that
$$[I', J'] \sim [e_0-e_1+I, -e_0+e_1+J].$$

When $J_2 \geq 2$, let
$$[I', J'] = [e_2+e_3+(I-e_0-e_3), e_0+e_3+(J-e_2-e_3)].$$
Then, by Lemma \ref{lem:summary of lemmas}.$(1)$,
\begin{align*}
[I, J] + [I', J'] &= [e_0+e_3+(I-e_0-e_3), e_2+e_3+(J-e_2-e_3)] \\
                  & \quad +[e_2+e_3+(I-e_0-e_3), e_0+e_3+(J-e_2-e_3)] \\
                  &= [2e_3+(I-e_0-e_3), e_0+e_2+(J-e_2-e_3)] \\
                  & \quad +[e_0+e_2+(I-e_0-e_3), 2e_3+(J-e_2-e_3)] \\
                  &= [-e_0+e_3+I, e_0-e_3+J]+[e_2-e_3+I, -e_2+e_3+J] .
\end{align*}
Since $I'_2, J'_2 \geq 1$, it follows from Lemma \ref{lem:summary of lemmas}.$(2)$ that $[I', J'] \sim [e_2-e_3+I, -e_2+e_3+J]$. \qed \\

\noindent \textbf{Proof of Theorem \ref{thm:finite generating set of rank 3 quadrics}.} By Proposition \ref{prop:QR(3) criteria}, we need to show that $[I,J]\sim [K,L]$ for any choice of $I,J,K,L \in A(n,d)$ with $I+J=K+L$. As mentioned after Theorem \ref{thm:finite generating set of rank 3 quadrics}, we intend to proceed with the proof by means of a double induction on $n \geq 1$ and $d \geq 3$. And for this purpose, we know that the statement of Theorem \ref{thm:finite generating set of rank 3 quadrics} holds for $n=1$ and for $d=3$, by \cite[Corollary 2.4]{HLMP} and Proposition \ref{boundlemma}, respectively. Hence, as the induction hypothesis, we assume that the statement of Theorem \ref{thm:finite generating set of rank 3 quadrics} is true for $(\P^{n'} , \mathcal{O}_{\P^{n'}} (d'))$ whenever $n' < n$ and $3 \leq d' < d$.

The original proof in \cite{HLMP} divides the argument into twelve cases and repeatedly applies \cite[Lemmas 4.4–4.7]{HLMP} to handle each case.
Figure \ref{tree} summarizes which lemma is used in each case. Except for \cite[Lemmas 4.4–4.7]{HLMP}, all other components of the proof remain valid in characteristic $3$.
Therefore, it suffices to explain how the lemmas used in the original proof are replaced in our setting. Accordingly, \cite[Lemmas 4.4- 4.7]{HLMP} are respectively replaced by Lemmas \ref{lem:summary of lemmas}.$(2)\sim (5)$ in this paper. This completes the proof. \qed  \\

\begin{figure}[H] 
  \centering \includegraphics[scale=0.5]{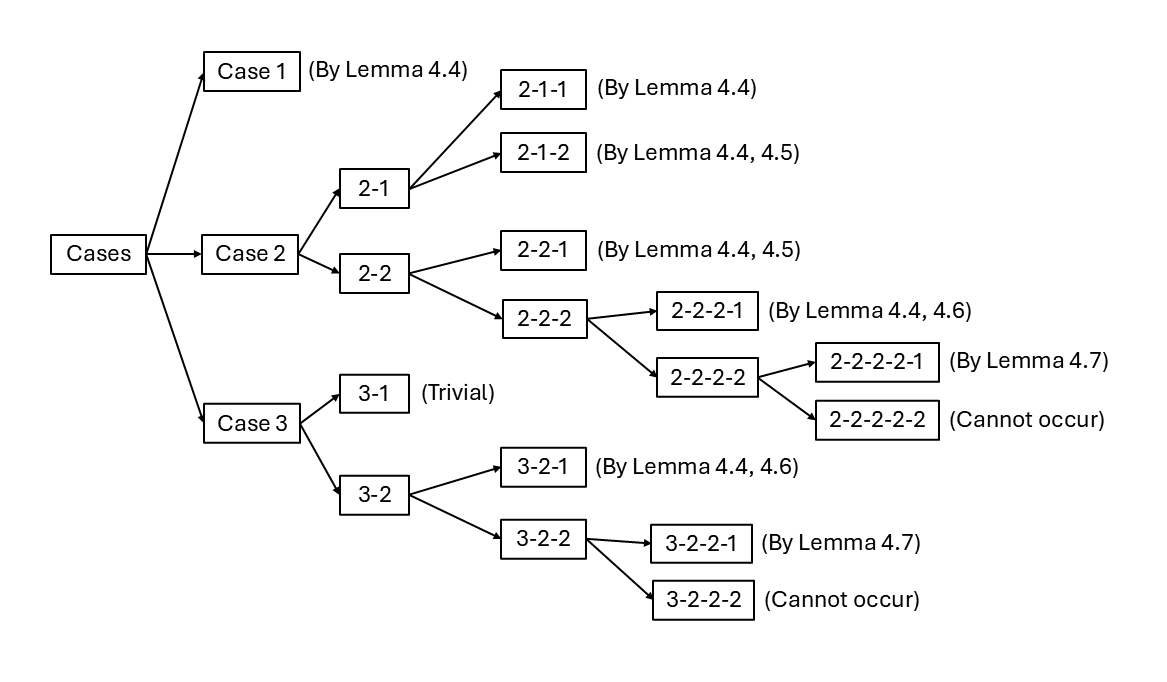} 
  \caption{Structure of the proof of Theorem 4.2 in \cite{HLMP}}
  \label{tree}
\end{figure}

\noindent {\bf Proof of Theorem \ref{thm:main theorem 1}.$(2)$.} Let $X_d \subset = \P H^0 (X,L^d )$ be the linearly normal embedding of $X$ associated with the complete linear series $|L^d|$. Let $V_{n,d} \subset \P^N$ denote the $d$th Veronese embedding of $\P^n$. As shown in the proof of Theorem 1.3 in \cite{HLMP}, the variety $X_d$ is, ideal-theoretically, a linear section of $V_{n,d}$. Since we assume $d \geq 3$, Theorem \ref{thm:main theorem 1}.$(1)$ ensures that $V_{n,d}$ satisfies property $\QR(3)$. Consequently, it follows that $(X,L^d)$ satisfies Property $\QR(3)$ as well.                \qed \\

\section{Proofs of Theorem \ref{thm:main theorem 2} and  \ref{thm:main theorem 3}}
\noindent This section is devoted to the proofs of Theorems \ref{thm:main theorem 2} and Theorem \ref{thm:main theorem 3}. Throughout this section, let $\mathbb{K}$ be an algebraically closed field of characteristic $3$ and let $n \geq 3$ be an integer.

First, let $V_{n,2} := \nu_2 (\P^n ) \subset \P^N ,~N = {n+2 \choose 2}-1$, be the second Veronese embedding of $\P^n$ for some $n \geq 3$. Let $\Phi_3 (X)$ denote the set of all rank $3$ quadrics in the projective space $\P (I(X)_2 )$. Since the Picard group of $\P^n$ is generated by $\mathcal{O}_{\P^n} (1)$, it holds that
\begin{equation*}
W(n,2) = W (\mathcal{O}_{\P^n} (1),\mathcal{O}_{\P^n} )
\end{equation*}
and
\begin{equation*}
\Gamma (n,2) = \Gamma (\mathcal{O}_{\P^n} (1),\mathcal{O}_{\P^n} ) ).
\end{equation*}
Therefore, the $\mathbb{K}$-vector space $W(n,2)$ is generated by the finite set $\Gamma (\mathcal{O}_{\P^n} (1),\mathcal{O}_{\P^n} )$ (cf. Notation and Remarks \ref{nar:Veronese}.$(2)$). Along this line, we begin with the following result.

\begin{proposition}\label{prop:lin indep}
If $W(n,2)$ denotes the $\mathbb{K}$-vector space defined above, then
$$\dim_{\mathbb{K}} ~W(n,2) =  \frac{n(n+1) (n^2 +9n+2)}{24} .$$
\end{proposition}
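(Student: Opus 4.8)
The plan is to decompose $W(n,2)$ according to the natural torus action and to compute each graded piece separately. Let $T=(\mathbb{K}^{\ast})^{n+1}$ act on $\P^n$ by scaling the coordinates $x_0,\dots,x_n$; this induces a $T$-action on $\P^N$ fixing $V_{n,2}$, hence on $I(V_{n,2})_2$, whose weight spaces are indexed by $M\in A(n,4)$. The weight-$M$ subspace $K_M$ of $I(V_{n,2})_2$ is spanned by the binomials $z_Iz_J-z_Kz_L$ with $I+J=K+L=M$, and since every $z_Iz_J$ with $I+J=M$ maps to the same degree-$4$ monomial $x^M$, one has
$$\dim_{\mathbb{K}} K_M=\#\{\,\{I,J\}:I+J=M,\ I,J\in A(n,2)\,\}-1.$$
Because $W(n,2)$ is, by Notation and Remarks \ref{nar:Veronese}.$(2)$(a), the span of all rank $3$ quadrics of $I(V_{n,2})$, and this set is $T$-stable, $W(n,2)$ is a $T$-subrepresentation and therefore graded: $W(n,2)=\bigoplus_M W_M$ with $W_M=W(n,2)\cap K_M$. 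Thus $\dim_{\mathbb{K}}W(n,2)=\sum_{M}\dim_{\mathbb{K}}W_M$, and by the $S_{n+1}$-symmetry permuting coordinates, $\dim_{\mathbb{K}}W_M$ depends only on the partition type of $M$. I abbreviate $Q(s,t):=Q_{\mathcal{O}_{\P^n}(1),\mathcal{O}_{\P^n}}(s,t,1)$.

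Next I would run through the five partition types of $4$, all occurring once $n\geq 3$, and to bound $W_M$ from above I would use the finite spanning set $\Gamma(n,2)$ of Notation and Remarks \ref{nar:Veronese}.$(2)$(b), so only finitely many generators need inspection. For types $(4)$ and $(3,1)$ the factorization $M=I+J$ is unique, so $K_M=0$ and $W_M=0$. For type $(2,2)$, say $M=2e_i+2e_j$, one has $\dim_{\mathbb{K}}K_M=1$, and $Q(x_i,x_j)=z_{2e_i}z_{2e_j}-z_{e_i+e_j}^2$ is already of pure weight $M$ and spans $K_M$, so $\dim_{\mathbb{K}}W_M=1$. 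For type $(2,1,1)$, say $M=2e_i+e_j+e_k$, again $\dim_{\mathbb{K}}K_M=1$; here I would extract the weight-$M$ component of $Q(x_i+x_j,\,x_i+x_k)$ and find it equal to $-2\bigl(z_{2e_i}z_{e_j+e_k}-z_{e_i+e_j}z_{e_i+e_k}\bigr)$, a generator of $K_M$ since $\mathrm{char}(\mathbb{K})\neq 2$; hence $\dim_{\mathbb{K}}W_M=1$.

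The crux, and the only place where $\mathrm{char}(\mathbb{K})=3$ intervenes, is type $(1,1,1,1)$, say $M=e_i+e_j+e_k+e_l$ with distinct indices. There are three factorizations, so $\dim_{\mathbb{K}}K_M=2$, and $K_M$ is the plane $\{\alpha a+\beta b+\gamma c:\alpha+\beta+\gamma=0\}$ where $a=z_{e_i+e_j}z_{e_k+e_l}$, $b=z_{e_i+e_k}z_{e_j+e_l}$, $c=z_{e_i+e_l}z_{e_j+e_k}$. Among the generators of $\Gamma(n,2)$, the only ones with nonzero weight-$M$ component are the three $\Gamma_{22}$-elements $Q(x_i+x_j,\,x_k+x_l)$ and its two repairings, because every other generator involves at most three of the indices $i,j,k,l$. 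A direct expansion gives their weight-$M$ components as $4a-2b-2c$, $4b-2a-2c$, and $4c-2a-2b$; in characteristic $3$ these all collapse to the single vector $a+b+c$. Therefore $\dim_{\mathbb{K}}W_M=1$, one less than $\dim_{\mathbb{K}}K_M$. This is exactly the characteristic-$3$ degeneracy: for $\mathrm{char}(\mathbb{K})\neq 2,3$ the same three vectors already span the full plane $K_M$. I expect the main point requiring care to be the combination of two facts here, namely that no further generator contributes to this weight space, and that the collapse $4a-2b-2c\equiv a+b+c$ occurs uniformly for all three pairings.

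Finally I would assemble the count. The number of $M$ of each contributing type is $\binom{n+1}{2}$ for $(2,2)$, $(n+1)\binom{n}{2}$ for $(2,1,1)$, and $\binom{n+1}{4}$ for $(1,1,1,1)$, each with $\dim_{\mathbb{K}}W_M=1$, the remaining types contributing $0$. Hence
$$\dim_{\mathbb{K}}W(n,2)=\binom{n+1}{2}+(n+1)\binom{n}{2}+\binom{n+1}{4}.$$
Factoring out $\tfrac{n(n+1)}{24}$ leaves the bracket $12+12(n-1)+(n-1)(n-2)=n^2+9n+2$, which yields the asserted value $\tfrac{n(n+1)(n^2+9n+2)}{24}$ and completes the argument. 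I also note that this computation immediately recovers Theorem \ref{thm:main theorem 2}, since the codimension deficiency $\sum_M(\dim_{\mathbb{K}}K_M-\dim_{\mathbb{K}}W_M)$ is contributed solely by the type $(1,1,1,1)$ and equals $\binom{n+1}{4}$.
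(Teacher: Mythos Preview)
Your argument is correct, and the route you take is genuinely different from the paper's, though the two approaches ultimately isolate the same phenomenon.

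The paper proceeds combinatorially: it replaces $\Gamma_{12}$ and $\Gamma_{22}$ by modified sets $\Gamma'_{12}=\{G'_{i,j,k}\}$ and $\Gamma'_{22}=\{H'_{i,j,k,l}\}$, obtained by subtracting off lower-order generators so that each element becomes a binomial (in your language, a pure $T$-weight vector). A separate lemma then shows that in characteristic $3$ the assignment $(i,j,k,l)\mapsto H'_{i,j,k,l}$ is exactly $3$-to-$1$, whence $|\Gamma'_{22}|=\tfrac{1}{3}|\Delta_3|$. Linear independence of the resulting set $\Gamma'=\Gamma_{11}\cup\Gamma'_{12}\cup\Gamma'_{22}$ is checked by verifying that the leading monomials under a tailored monomial order are pairwise distinct, and the dimension is then $|\Gamma'|$.

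Your approach replaces this bookkeeping with the $T$-grading: once $W(n,2)$ is known to be a torus-subrepresentation, independence across weights is automatic, and the only nontrivial computation is inside the weight space of type $(1,1,1,1)$, where your collapse $4a-2b-2c\equiv a+b+c$ is exactly the paper's $3$-to-$1$ lemma rephrased. In fact the paper's $H'_{i,j,k,l}=4a-2b-2c$ is literally the weight-$M$ component you extract, and its $G'_{i,j,k}$ is your weight-$(2,1,1)$ component (up to which quadric you project). What your framing buys is conceptual clarity: no monomial order is needed, and Theorem~\ref{thm:main theorem 2} falls out immediately as the weight-by-weight defect $\sum_M(\dim K_M-\dim W_M)=\binom{n+1}{4}$. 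What the paper's approach buys is an explicit basis of $W(n,2)$ given by named binomials, which can be useful for further computations.

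One small point of presentation: the element $Q(x_i+x_j,x_i+x_k)$ you use for type $(2,1,1)$ is not itself in $\Gamma(n,2)$ (the sets $\Gamma_{12},\Gamma_{22}$ require distinct indices), but this is harmless since it is still a rank-$3$ quadric and hence lies in $W(n,2)$; alternatively you could project $G_{j,k,i}\in\Gamma_{12}$ and arrive at the same conclusion.
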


To verify Proposition \ref{prop:lin indep}, we introduce some notation.

\begin{notation and remarks}\label{nar:second Veronese}
Concerning the mapping $Q_{L_1 , L_2}$ in Notation and Remarks \ref{nar:producingQofR3}.$(1)$, we need to consider the decomposition 
\begin{equation*}
\mathcal{O}_{\P^n} (2) = \mathcal{O}_{\P^n} (1)^{\otimes 2} \otimes \mathcal{O}_{\P^n}.
\end{equation*}

\renewcommand{\descriptionlabel}[1]%
             {\hspace{\labelsep}\textrm{#1}}
\begin{description}
\setlength{\labelwidth}{13mm}
\setlength{\labelsep}{1.5mm}
\setlength{\itemindent}{0mm}

\item[$(1)$] Let $R = \mathbb{K} [x_0 , x_1 , \ldots , x_n ]$ be the homogeneous coordinate ring of $\P^n$. Then $\{ x_0 , x_1 , \ldots , x_n \}$ is a basis of $H^0 (\P^n, \mathcal{O}_{\P^n} (1))$.

\item[$(2)$] Rewriting the sets $\Delta_1$, $\Delta_2$ and $\Delta_3$ defined in Notation and Remarks \ref{nar:producingQofR3}.$(4)$ for $(\P^n , \mathcal{O}_{\P^n} (2))$, we have the following:
$$\begin{cases}
\Delta_1 = \{ (i,j)~|~ i,j \in \{0,1,\ldots ,n \},~i<j \},\\
\Delta_2 = \{ (i,j,k) ~|~ (i,j) \in \Delta_1 ,~k \in \{0,1,\ldots ,n \} - \{ i,j \} \}, \quad \mbox{and}\\
\Delta_3 = \{ ( i,j,k, l) ~|~ (i,j),(k, l) \in \Delta_1 ,~i<k ,~ j \neq k,l  \}.
\end{cases}$$
Thus, it holds that
\begin{align}\label{eq:Delta123}
|\Delta_1 | = {n+1 \choose 2} , ~ |\Delta_2 | = (n-1) {n+1 \choose 2} ~ \mbox{and} ~ |\Delta_3 | = 3 {n+1 \choose 4}.
\end{align}
We set
\begin{align*}
F_{i,j} &:= Q_{\mathcal{O}_{\mathbb{P}^n}(1),\mathcal{O}_{\mathbb{P}^n}}(x_i, x_j, 1) ~ \mbox{for}~(i,j) \in \Delta_1 ; \\
G_{i,j,k} &:= Q_{\mathcal{O}_{\mathbb{P}^n}(1),\mathcal{O}_{\mathbb{P}^n}}(x_i+x_j, x_k, 1)  ~ \mbox{for}~(i,j,k) \in \Delta_2  ; \\
H_{i,j,k,l} &:= Q_{\mathcal{O}_{\mathbb{P}^n}(1),\mathcal{O}_{\mathbb{P}^n}}(x_i + x_j, x_k + x_l, 1) ~ \mbox{for}~(i,j,k,l) \in \Delta_3 .
\end{align*}
Therefore, $\Gamma (\mathcal{O}_{\P^n} (1),\mathcal{O}_{\P^n}) = \Gamma_{11} \cup \Gamma_{12} \cup \Gamma_{22}$ where
$$\begin{cases}
\Gamma_{11}=\big\{ F_{i,j} ~|~ (i,j) \in \Delta_1 \big\}\\
\Gamma_{12}=\big\{ G_{i,j,k} ~|~ (i,j,k) \in \Delta_2  \big\}\\
\Gamma_{22}=\big\{H_{i,j,k,l} ~|~ (i,j,k,l)  \in \Delta_3  \big\}.
\end{cases}$$

\item[$(3)$] For every $(i,j) \in \Delta_1$, it holds that $F_{i,j}=[2e_i,2e_j]-[e_i+e_j,e_i+e_j]$.

\item[$(4)$] For distinct elements $i,j \in \{0,1,\ldots ,n \}$, we define $\overline{F_{i,j}}$ by
$$\overline{F_{i,j}}= \begin{cases}
F_{i,j} ~ & \mbox{if} ~ i>j,~ \mbox{and} \\
F_{j,i} ~ & \mbox{if} ~ i<j.
\end{cases}$$
Next, we define $G_{i,j,k}'$ for each $(i,j,k)\in\Delta_2$ and $H_{i,j,k,l}'$ for each $(i,j,k,l)\in\Delta_3$ as follows:
\begin{align*}
G_{i,j,k}'   &:= G_{i,j,k}-\overline{F_{i,k}}-\overline{F_{j,k}} \\
             & = 2[e_i+e_j,2e_k]-2[e_i+e_k,e_j+e_k] \\
H_{i,j,k,l}' &:= H_{i,j,k,l}-G_{i,j,k}-G_{i,j,l}-G_{k,l,i}-G_{k,l,j}+F_{i,k}+F_{i,l}+\overline{F_{j,k}}+\overline{F_{j,l}} \\
             & = 4[e_i + e_j, e_k + e_l] - 2[e_i + e_k, e_j + e_l]- 2[e_i + e_l, e_j + e_k]
\end{align*}
Finally, we define $\Gamma_{12}'$ and $\Gamma_{22}'$ as
\begin{equation*}
\Gamma_{12}':=\{G_{i,j,k}' ~|~ (i,j,k)\in\Delta_2\} \quad \mbox{and} \quad  \Gamma_{22}':=\{H_{i,j,k,l}' ~|~(i,j,k,l)\in\Delta_3\}.
\end{equation*}
\end{description}
\end{notation and remarks}
Before proving Proposition \ref{prop:lin indep}, we establish the following lemma.
\begin{lemma}\label{lemma:G'}
$|{\Gamma_{22}}'| = \frac{1}{3} \times |\Delta_3 |$.
\end{lemma}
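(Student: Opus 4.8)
The plan is to exploit the fact that in characteristic $3$ the coefficients defining $H_{i,j,k,l}'$ collapse to a single value. Reducing $4 \equiv 1$ and $-2 \equiv 1$ modulo $3$, the formula in Notation and Remarks \ref{nar:second Veronese}.$(4)$ becomes
$$H_{i,j,k,l}' = [e_i+e_j, e_k+e_l] + [e_i+e_k, e_j+e_l] + [e_i+e_l, e_j+e_k].$$
First I would observe that the three summands correspond exactly to the three ways of partitioning the index set $\{i,j,k,l\}$ into two unordered pairs, and that each bracket $[\,\cdot\,,\,\cdot\,]$ is symmetric in its two arguments (being a product of coordinates). Consequently $H_{i,j,k,l}'$ is invariant under every permutation of $i,j,k,l$, so it depends only on the underlying $4$-element subset $S = \{i,j,k,l\} \subseteq \{0,1,\dots,n\}$. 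This symmetrization is the crux of the argument and the only place where characteristic $3$ is used: in any other characteristic the coefficients $4,-2,-2$ differ and no such collapse occurs.

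Next I would set up the counting. Writing an element of $\Delta_3$ as $(i,j,k,l)$ with $i<j$, $k<l$, $i<k$ and $j \notin \{k,l\}$, one checks that $i,j,k,l$ are four distinct indices and that $i$ is forced to be the minimum of $S$ (since $i<j$, $i<k$, and $k<l$ give $i<j,k,l$). I would then verify that, for each fixed $4$-subset $S = \{a<b<c<d\}$, the elements of $\Delta_3$ with $\{i,j,k,l\} = S$ are obtained by setting $i=a$, letting $j$ be any of the remaining three indices, and taking $(k,l)$ to be the other two in increasing order. This yields precisely $3$ preimages over each $S$, recovering $|\Delta_3| = 3\binom{n+1}{4}$ and showing that the map $(i,j,k,l) \mapsto H_{i,j,k,l}'$ factors through the set of $4$-subsets with fibers of size $3$.

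Finally I would argue that distinct $4$-subsets yield distinct (and nonzero) forms. Writing $z_{xy} := z_{e_x+e_y}$, the reduced expression reads $H_S' = z_{ab}z_{cd} + z_{ac}z_{bd} + z_{ad}z_{bc}$, and every monomial occurring in $H_S'$ is a product of two coordinates whose indices have union $S$. Hence $H_S'$ and $H_{S'}'$ have disjoint monomial supports whenever $S \neq S'$; in particular each $H_S'$ is nonzero and the forms $\{H_S'\}$ are pairwise distinct. Combining this with the fiber count gives $|\Gamma_{22}'| = \binom{n+1}{4} = \tfrac{1}{3}|\Delta_3|$, as claimed. I expect the only mildly delicate point to be the bookkeeping in the fiber-size computation (confirming that the constraint $i<k$ forces $i$ to be $\min S$ while $j \notin \{k,l\}$ is then automatic, so exactly the three choices of $j$ survive); the symmetrization and the disjoint-support observation are immediate once set up.
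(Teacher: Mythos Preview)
Your proof is correct and follows the same overall structure as the paper's: reduce the coefficients modulo $3$ to obtain the fully symmetric expression, factor the map $(i,j,k,l)\mapsto H'_{i,j,k,l}$ through the underlying $4$-subset, and check that each $4$-subset has exactly three preimages in $\Delta_3$.

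The one place where your argument differs from the paper's is in verifying that distinct $4$-subsets $S\neq S'$ give distinct forms. The paper introduces a lexicographic monomial order on $\mathbb{K}[z_I]$, computes the leading monomial of each $H'_{i,j,k,l}$, and reads off the four indices from it. Your route is more direct: you observe that every monomial appearing in $H'_S$ is a product $z_{pq}z_{rs}$ with $\{p,q,r,s\}=S$, so $H'_S$ and $H'_{S'}$ have disjoint monomial supports whenever $S\neq S'$. This is a cleaner way to settle the lemma in isolation. The paper's choice is not wasted effort, however: the monomial order and the leading-term computation are reused immediately afterwards in the proof of Proposition~\ref{prop:lin indep} to show that the entire set $\Gamma'=\Gamma_{11}\cup\Gamma'_{12}\cup\Gamma'_{22}$ is linearly independent, for which one needs more than pairwise distinctness of the $H'_S$.
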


\begin{proof}
From the definition of $\Gamma_{22}'$, we have a surjective map
$$\phi:\Delta_3\rightarrow \Gamma_{22}'$$
defined by $\phi((i,j,k,l))=H'_{i,j,k,l}$. It suffices to prove that $\phi$ is a $3$-to-$1$ map.

Since ${\rm char}(\mathbb{K}) = 3$, we have
\begin{equation*}
H'_{i,j,k,l}=[e_i + e_j, e_k + e_l]+[e_i + e_k, e_j + e_l]+[e_i + e_l, e_j + e_k]
\end{equation*}
(cf. Notation and Remarks \ref{nar:second Veronese}.$(4)$), and so
\begin{equation}\label{eq:H'}
H'_{i,j,k,l}= \begin{cases}
H'_{i,k,j,l}=H'_{i,l,j,k} ~ & \mbox{if} ~ j <k ;   \\
H'_{i,k,j,l}=H'_{i,l,k,j} ~ & \mbox{if} ~ k<j<l ; \\
H'_{i,k,l,j}=H'_{i,l,k,j} ~ & \mbox{if} ~ l<j .\\
\end{cases}
\end{equation}
This shows that $\phi^{-1} (\tau)$ has at least three elements for every $\tau \in \Delta_3$.

Next, we will show that if $(i,j,k,l)$ and $(i',j',k',l')$ are two elements in $\Delta_3$ such that
$H_{i,j,k,l}' = H_{i',j',k',l'}'$, then $(i',j',k',l')$ is obtained by permuting the entries of $(i,j,k,l)$. To this aim, we use the lexicographic monomial order on the homogeneous coordinate ring of $\P^N$ such that the variable order is defined as follows: For $0 \leq i ,j,k,l \leq n$ with $i \leq j$ and $k \leq l$, we define
\begin{equation*}
z_{e_i+e_j} > z_{e_k+e_l} ~ \mbox{if} ~ \begin{cases} i=j, ~k=l ~ \mbox{and} ~i<k ;~ \mbox{or} \\
    i=j ~\mbox{and}~k<l; ~ or\\
                                                      i<j, ~ k<l, ~ \mbox{and either both}~ i=k ~\mbox{and}~ j<l, ~ \mbox{or}~ i<k.
                                                      \end{cases}
\end{equation*}
Indeed, we may interpret each $z_{i, j}$ with $0\leq i \leq j \leq n$ as a lattice point $(i, j)$ in the upper triangular region including the main diagonal.
Accordingly, we define the order on the variables $z_{e_i+e_j}$ as follows.
Among the diagonal points $i=j$, those with smaller indices have higher order.
When comparing a diagonal point $(i,i)$ with a point $(k,l)$ not on the diagonal, the diagonal point always has higher order.
Finally, for two points $(i,j)$ and $(k,l)$ not on the diagonal with $i<j$ and $k<l$, the order is determined by the indices: $z_{e_i+e_j}$ has higher order if $i<k$, or if $i=k$ and $j<l$.

For $1 \leq p \leq 4$, we denote by $\sigma_p(i,j,k,l)$ the $p$th smallest number among $i,j,k,l$. Thus $\sigma_1(i,j,k,l) = i$ if $(i,j,k,l) \in \Delta_3$. One can check that the leading monomial $\mathrm{LM}(H'_{i,j,k,l})$ of $H'_{i,j,k,l}$ is given as
\begin{equation}\label{eq:leading term}
\mathrm{LM}(H'_{i,j,k,l})   = z_{e_i+e_{\sigma_2(i,j,k,l)}} z_{e_{\sigma_3(i,j,k,l)}+e_{\sigma_4(i,j,k,l)}} ~ \mbox{for every} ~ (i,j,k,l) \in \Delta_3 .
\end{equation}
Now, let $(i,j,k,l)$ and $(i',j',k',l')$ be two elements of $\Delta_3$ such that $H_{i,j,k,l}' = H_{i',j',k',l'}'$. Then
$$\mathrm{LM}(H'_{i,j,k,l}) = \mathrm{LM}( H_{i',j',k',l'}' )$$
and hence
$$\sigma_p (i,j,k,l)=\sigma_p (i',j',k',l') ~ \mbox{for all}~ 1 \leq p \leq 4$$
by (\ref{eq:leading term}). This implies that $(i',j',k',l')$ is obtained by permuting the entries of $(i,j,k,l)$.
Since $(i',j',k',l') \in \Delta_3$, it must be one of the following three possibilities:
\begin{align*}
&(i,\sigma_2(i,j,k,l),\sigma_3(i,j,k,l),\sigma_4(i,j,k,l)),\\
&(i,\sigma_3(i,j,k,l),\sigma_2(i,j,k,l),\sigma_4(i,j,k,l)),\\
&(i,\sigma_4(i,j,k,l),\sigma_2(i,j,k,l),\sigma_3(i,j,k,l)).
\end{align*}
This completes the proof that $\phi: \Delta_3 \rightarrow \Gamma_{22}'$ is a $3$-to-$1$ map.
\end{proof}

Lemma \ref{lemma:G'} now allows us to prove Proposition \ref{prop:lin indep}. \\

\noindent {\bf Proof of Proposition \ref{prop:lin indep}.} Note that, by Notation and Remarks \ref{nar:second Veronese}.$(3)$, the $\mathbb{K}$-space $W(n,d)$ is generated by the union
$$\Gamma ' := \Gamma_{11} \cup \Gamma'_{12}\cup \Gamma'_{22}.$$
Secondly, one can see that $\Gamma'$ is $\mathbb{K}$-linearly independent. Indeed, with the monomial ordering defined in the proof of Lemma \ref{lemma:G'}, we have
\begin{alignat*}{3}
&\mathrm{LM}(F_{i,j})        &&= z_{2e_i}z_{2e_j}    && \text{for}~(i,j) \in \Delta_1 ,\\
&\mathrm{LM}(G'_{i,j,k})     &&= z_{e_i+e_j}z_{2e_k} && \text{for}~(i,j,k)\in\Delta_2 , ~\mbox{and} \\
&\mathrm{LM}(H'_{i,j,k,l})   &&= z_{e_i+e_{\sigma_2(i,j,k,l)}} z_{e_{\sigma_3(i,j,k,l)}+e_{\sigma_4(i,j,k,l)}} &&\text{for}(i,j,k,l) \in \Delta_3 .
\end{alignat*}
This shows that the leading monomials of elements in $\Gamma'$ are all distinct, which apparently implies the $\mathbb{K}$-linear independence of $\Gamma'$.

Now, by (\ref{eq:Delta123}) and Lemma \ref{lemma:G'}, we have
\begin{align*}
\dim_{\mathbb{K}} ~W(n,d) && = & ~ |\Gamma ' |        \\
                          && = & ~ |\Gamma_{11}| + |\Gamma'_{12}| + |\Gamma'_{22}| \\
                          && = & ~ |\Delta_1| + |\Delta_2| + \frac{1}{3} |\Delta_3| \\
                          && = & ~ {n+1 \choose 2} + (n-1) {n+1 \choose 2} + \frac{1}{3} \times 3 \times \binom{n+1}{4} \\
                          && = & ~ \frac{n^2 +9n+2}{12} \times {n+1 \choose 2}.
\end{align*}
This completes the proof. \qed \\

Now, we can see that Theorem \ref{thm:main theorem 2} follows immediately from Proposition \ref{prop:lin indep}.
\\

\noindent {\bf Proof of Theorem \ref{thm:main theorem 2}.} The assertion follows immediately from Proposition \ref{prop:lin indep}. To be precise, it holds that
\begin{align*}
\mbox{codim} (  \langle \Phi_3 (X) \rangle ,\P ( I(X)_2 )) && = & ~ \dim_{\mathbb{K}} ~ I(X)_2 -  \dim_{\mathbb{K}}~W(n,d)         \\
                          && = & ~ {{n+2 \choose 2} +1 \choose 2} -  \frac{n(n+1) (n^2 +9n+2)}{24}   \\
                          && = & ~  {n+1 \choose 4}.
\end{align*}
This completes the proof. \qed \\

Next, we prove Theorem \ref{thm:main theorem 3}. Let $X  \subset \P^r$ be an $n$-dimensional smooth complete intersection of quadrics with $n \geq 3$. By the Lefschetz hyperplane theorem \cite[XII, Corollary 3.7]{Gro}, the Picard group of $X$ is generated by $\mathcal{O}_X (1)$. \\

\noindent {\bf Proof of Theorem \ref{thm:main theorem 3}.} Again, we denote by
$$V_{r,2} := \nu_2 (\P^r ) \subset \P^N , ~N= {r+2 \choose 2}-1,$$
the second Veronese embedding of $\P^r$. Then
$$\langle \nu_2 (X) \rangle = \P^{N-r+n}$$
and the natural map
$$g : I(V_{r,2} / \P^N )_2 \rightarrow I(\nu_2 (X) / \P^{N-r+n} )_2$$
is an isomorphism since $V_{r,2}$ is arithmetically Cohen-Macaulay. Now, consider the following commutative diagram
\begin{equation*}
\begin{CD}
H^0 (\P^r , \mathcal{O}_{\P^r} (1)) \times H^0 (\P^r , \mathcal{O}_{\P^r} (1)) & \quad \stackrel{\alpha}{\rightarrow} \quad & I(V_{r,2} / \P^N )_2 \\
\downarrow f & & \downarrow g \\
H^0 (X , \mathcal{O}_{X} (1)) \times H^0 (X , \mathcal{O}_{X} (1)) & \quad \stackrel{\beta}{\rightarrow}              \quad & I(\nu_2 (X) / \P^{N-r+n} )_2
\end{CD}
\end{equation*}
where $\alpha = Q_{\mathcal{O}_{\P^r} (1) , \mathcal{O}_{\P^r}}$ and $\beta = Q_{\mathcal{O}_{X} (1) , \mathcal{O}_{X}}$ (cf. Notation and Remarks \ref{nar:producingQofR3}.$(1)$). Every rank $3$ quadratic polynomial in the ideal of $\nu_2 (X)$ arises as the image $\alpha$, since $X$ is smooth and its Picard group is generated by $\mathcal{O}_X (1)$ (cf. Notation and Remarks \ref{nar:producingQofR3}.$(2)$). Theorem \ref{thm:main theorem 3} shows that the image of $\alpha$ spans a proper subspace of $ I(V_{r,2} / \P^N )_2$ since $r \geq 4$. Consequently, the image of $\beta$ is a proper subspace of $I(\nu_2 (X) / \P^{N-r+n} )_2$. This completes the proof that $(X,\mathcal{O}_X (2))$ fails to satisfy property $\QR (3)$. On the other hand, the reason why $(X,\mathcal{O}_X (2))$ satisfies property $\QR (4)$ is straightforward: indeed, $\nu_2 (X)$ is, ideal-theoretically, a linear section of $V_{r,2}$, and $(\P^r,\mathcal{O}_{\P^r} (2))$ satisfies property $\QR (4)$. Therefore, the rank index of $(X,\mathcal{O}_X (2))$ is exactly $4$.           \qed \\

Finally, we present an example showing that the hypothesis $n \geq 3$ in Theorem \ref{thm:main theorem 3} cannot be weakened.

\begin{example}\label{ex:quadric surface}
Let $X = V(x_0x_3-x_1x_2) \subset \mathbb{P}^3$ be the smooth quadric surface. Then $X = \P^1 \times \P^1$, so that
\begin{equation*}
{\rm Pic}(X) \cong \Z [\mathcal{O} (1,0)] \oplus \Z [\mathcal{O} (0,1)],
\end{equation*}
and
\begin{equation*}
X = \{[su : sv : tu : tv ] \mid [s, t],[u, v] \in \mathbb{P}^1 \}.
\end{equation*}
Now, consider the surface $S := \nu_2 (X) \subset \P^8$. Then
\begin{equation*}
S = \{[s^2 u^2 : s^2 uv : st u^2 : stuv :  s^2 v^2 : stv^2 : t^2 u^2 : t^2 uv : t^2 v^2] \mid [s, t],[u, v] \in \mathbb{P}^1 \},
\end{equation*}
so that $S$ can be regarded as the hyperplane section of
\begin{equation*}
V_{3,2} = \{ [x_0^2 :x_0 x_1 :x_0 x_2 :x_0 x_3: x_1^2 :x_1x_2 :x_1 x_3 : x_2^2 : x_2x_3 :x_3^2 ] ~|~[x_0,x_1,x_2] \in \P^2 \} \subset \P^9
\end{equation*}
by the hyperplane $V (z_3 - z_5 )$, where $z_0 , z_1 , \ldots , z_9$ are the homogeneous coordinates of $\P^9$.

We will show that $(X,\mathcal{O}_X (2))$ satisfies property $\QR(3)$. Indeed, by Theorems \ref{thm:main theorem 2} and the proof of Theorems \ref{thm:main theorem 3}, the set $\Gamma (\mathcal{O}_X (1),\mathcal{O}_X )$ spans a $19$-dimensional subspace of the $20$-dimensional space $I(S)_2$. From the decomposition
\begin{equation*}
\mathcal{O}_X (2) =  \mathcal{O} (1,0)^2 \otimes \mathcal{O} (0,2) ,
\end{equation*}
we obtain the following rank $3$ quadratic polynomials in $I(S)_2$:
\begin{align*}
Q_1 &= \varphi(s^2 u^2)\varphi(t^2 u^2) - \varphi(stu^2)^2
= \varphi(x_0^2)\varphi(x_2^2) - \varphi(x_0 x_2)^2
= z_0 z_9 - z_2^2, \\
Q_2 &= \varphi(s^2 uv)\varphi(t^2 uv) - \varphi(stuv)^2
= \varphi(x_1 x_2)\varphi(x_3 x_2) - \varphi(x_1 x_3)^2
= z_1 z_8 - z_3^2, \\
Q_3 &= \varphi(s^2 v^2)\varphi(t^2 v^2) - \varphi(stv^2)^2
= \varphi(x_1^2)\varphi(x_3^2) - \varphi(x_1 x_3)^2
= z_4 z_9 - z_6^2.
\end{align*}
Note that $Q_1$ and $Q_2$ are contained in $\Gamma (\mathcal{O}_X (1),\mathcal{O}_X )$, whereas $Q_2$ is not. It follows that the union $\Gamma (\mathcal{O}_X (1),\mathcal{O}_X ) \cup \{ Q_2 \}$ spans $I(S)_2$, which proves that $S \subset \P^8$ satisfies property $\QR (3)$.
In connection with this example, we also refer the reader to Theorem 6.2 in \cite{MP}.
\end{example}

\end{document}